\documentclass[10pt,letterpaper]{article}

\usepackage[T1]{fontenc}
\usepackage[utf8]{inputenc}
\usepackage[margin=1in]{geometry}
\usepackage{amsmath,amssymb,amsthm,mathtools}
\usepackage{microtype}
\usepackage{enumitem}
\usepackage{xcolor}
\usepackage{aliascnt}
\usepackage[colorlinks=true,linkcolor=blue!55!black,citecolor=blue!55!black,urlcolor=blue!55!black]{hyperref}
\hypersetup{pdftitle={An improved finite bound for oriented trees in tournaments},pdfauthor={Jiangdong Ai}}
\usepackage[nameinlink,capitalise,noabbrev]{cleveref}

\setlist[itemize]{leftmargin=1.7em,itemsep=2pt,topsep=4pt}
\setlist[enumerate]{leftmargin=1.9em,itemsep=2pt,topsep=4pt}

\newtheorem{theorem}{Theorem}
\newaliascnt{lemma}{theorem}
\newtheorem{lemma}[lemma]{Lemma}
\aliascntresetthe{lemma}
\newaliascnt{proposition}{theorem}
\newtheorem{proposition}[proposition]{Proposition}
\aliascntresetthe{proposition}
\newaliascnt{corollary}{theorem}
\newtheorem{corollary}[corollary]{Corollary}
\aliascntresetthe{corollary}
\theoremstyle{definition}
\newaliascnt{definition}{theorem}

\aliascntresetthe{definition}
\theoremstyle{remark}
\newaliascnt{remark}{theorem}
\newtheorem{remark}[remark]{Remark}
\aliascntresetthe{remark}

\crefname{theorem}{Theorem}{Theorems}
\crefname{lemma}{Lemma}{Lemmas}
\crefname{proposition}{Proposition}{Propositions}
\crefname{corollary}{Corollary}{Corollaries}
\crefname{definition}{Definition}{Definitions}
\crefname{remark}{Remark}{Remarks}

\newcommand{\unv}{\operatorname{unvd}}
\newcommand{\cdown}{\mathcal C^{\downarrow}}
\newcommand{\cupward}{\mathcal C^{\uparrow}}
\newcommand{\gdown}{\gamma^{\downarrow}}
\newcommand{\gup}{\gamma^{\uparrow}}
\newcommand{\Lminus}{L^{-}}
\newcommand{\Lplus}{L^{+}}
\newcommand{\Nminus}{N^{-}}
\newcommand{\Nplus}{N^{+}}

\title{An improved finite bound for oriented trees in tournaments}
\author{Jiangdong Ai\thanks{School of Mathematical Sciences and LPMC, Nankai University, Tianjin 300071, P.R. China. Email: \href{mailto:jd@nankai.edu.cn}{\texttt{jd@nankai.edu.cn}}.},~ 
Xiaopan Lian\thanks{ Center for Combinatorics and LPMC, Nankai University, Tianjin 300071, P.R.
  China. Email: Lian@nankai.edu.cn.}}
\date{}

\begin{document}
\maketitle

\begin{abstract}
Sumner's universal tournament conjecture asserts that every tournament on $2n-2$ vertices contains every oriented tree on $n$ vertices. Let $f(n)$ be the least integer $N$ such that every tournament on $N$ vertices contains every oriented tree on $n$ vertices. Havet and Thomass\'e proved that $f(n)\le \lceil(7n-5)/2\rceil$, El Sahili improved this to $f(n)\le3n-3$, and Dross and Havet subsequently obtained $f(n)\le\lceil21n/8-47/16\rceil$. We refine their median-order method. More precisely, every non-bi-arborescence on $n$ vertices with $k$ leaves is $(4n-2k-4)$-unavoidable, which strictly improves their many-leaf estimate; bi-arborescences satisfy the stronger bound $2n-2$. Combining this refinement with their few-leaf bound gives $f(n)\le\lceil(18n-23)/7\rceil$ for every $n\ge2$. Thus the coefficient in the previously best general bound valid uniformly for all $n$ is reduced from $21/8$ to $18/7$.
\end{abstract}

\section{Introduction}

A tournament is an orientation of a complete graph. An oriented graph $D$ is $m$-\emph{unavoidable} if every tournament on $m$ vertices contains a copy of $D$, and $\unv(D)$ denotes the least such $m$. Write $f(n)=\max\{\unv(A):A\text{ is an oriented tree on }n\text{ vertices}\}$. Sumner's universal tournament conjecture states that $f(n)=2n-2$ for every $n\ge2$. The bound would be best possible, as shown by an out-star and a regular tournament on $2n-3$ vertices. K\"uhn, Mycroft and Osthus proved the conjecture for all sufficiently large $n$~\cite{KuhnMycroftOsthus2011}; here the issue is the strongest bound valid uniformly for every order.

The first linear estimate was obtained by H\"aggkvist and Thomason~\cite{HaggkvistThomason1991}, followed by improvements of Havet~\cite{Havet2002}, Havet and Thomass\'e~\cite{HavetThomasse2000}, and El Sahili~\cite{ElSahili2004}. Dross and Havet proved that every oriented tree on $n$ vertices with $k$ leaves is both $\lceil\frac32(n+k)-2\rceil$-unavoidable and, for $n\ge3$, $\lceil\frac92n-\frac52k-\frac92\rceil$-unavoidable. Their two estimates yield $f(n)\le\lceil\frac{21}{8}n-\frac{47}{16}\rceil$ for all $n\ge2$~\cite{DrossHavet2021}. Later work gives substantially sharper results in asymptotic or leaf-restricted regimes~\cite{BenfordMontgomery2022,BenfordMontgomery2025}, while recent related work still records the Dross--Havet estimate as the standard general finite bound~\cite{AboulkerEtAl2024}.

Our main result is the following.

\begin{theorem}\label{thm:main}
Every oriented tree $A$ on $n\ge2$ vertices satisfies $\unv(A)\le\lceil(18n-23)/7\rceil$.
\end{theorem}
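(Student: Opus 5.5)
The plan is to derive \cref{thm:main} from three estimates on $\unv(A)$ for a tree $A$ with $n$ vertices and $k$ leaves, by balancing two of them against each other.
\begin{enumerate}
\item[(i)] The Dross--Havet few-leaf bound $\unv(A)\le\lceil\tfrac32(n+k)-2\rceil$, which we use as a black box.
\item[(ii)] The new many-leaf bound $\unv(A)\le 4n-2k-4$, valid whenever $A$ is not a bi-arborescence.
\item[(iii)] The bound $\unv(A)\le 2n-2$ when $A$ is a bi-arborescence.
\end{enumerate}
The real work is proving (ii) and (iii). The deduction of \cref{thm:main} from them is arithmetic.

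\textbf{Proof of (ii).} We follow the median-order method of Dross and Havet. Fix a median order $v_1,\dots,v_N$ of a tournament $T$ with $N=4n-2k-4$. Remove the $k$ leaves of $A$ to obtain a subtree $A'$ on $n-k$ vertices. We embed $A'$ greedily along the median order, using the feedback property: each $v_i$ dominates at least half of any interval $v_{i+1},\dots,v_j$, and is dominated by at least half of any interval $v_j,\dots,v_{i-1}$. Each vertex of $A'$ should be placed so that enough in- or out-neighbours remain free in the appropriate interval. Afterwards we attach the leaves by a Hall-type matching argument. Here in-leaves are matched to the prefix and out-leaves to the suffix of the order.

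The improvement over Dross--Havet should come from a sharper accounting of the vertices wasted between consecutive embedded vertices of $A'$. We will choose the root of $A'$, and the side of the median order where each leaf cluster is placed, so that each non-leaf vertex costs at most $4$ positions and each leaf only $2$. This is what makes the total $4(n-k)+2k-4$. The assumption that $A$ is not a bi-arborescence is needed to find a vertex or arc where in- and out-leaves can be split between the two ends of the order; losing that split is what costs the extra amount in the old estimate.

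\textbf{Proof of (iii).} A bi-arborescence is an in-arborescence and an out-arborescence glued at a root. For these we embed the root at a middle vertex of the median order. The out-part is then embedded greedily into the later half and the in-part into the earlier half, which gives $2n-2$. This case is standard.

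\textbf{Combining the bounds.} Let $B=\lceil(18n-23)/7\rceil$. Bi-arborescences are handled by (iii), since $2n-2\le B$ for all $n\ge2$; this reduces to $14n-14\le 18n-23$ for $n\ge3$, and is checked directly for $n=2$. Now choose a threshold $t=t(n)$. For $k\le t$ use (i), and for $k\ge t+1$ use (ii). We need
\[
\left\lceil\tfrac{3(n+t)}{2}\right\rceil-2\le B \quad\text{and}\quad 4n-2t-6\le B .
\]
The real crossover of the two bounds is $k=(5n-4)/7$, where both equal $(18n-20)/7$. The integrality of $k$, together with the two ceilings, lets us save the extra $3/7$. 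We choose $t=\lfloor(5n-4)/7\rfloor$ (or an adjacent integer) and check the two inequalities separately for each residue of $n$ modulo $7$. For example, if $n=7m$ we take $t=5m-1$; then the two left-hand sides are $18m-3$ and $18m-4$, while $B=18m-3$. The small values $n\le 3$ are checked by hand.

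\textbf{Main obstacle.} The main obstacle is (ii), specifically the step in the median-order embedding where every internal vertex is charged at most $4$ and every leaf at most $2$. I would first try an induction on $n$ that removes a suitable leaf cluster, maintaining the invariant that the embedded vertices form a prefix-compatible set in the median order. I expect the delicate case to be when all leaves hang off a single vertex, which is where the non-bi-arborescence hypothesis should enter.
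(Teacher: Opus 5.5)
Your reduction of \cref{thm:main} to (i)--(iii) is correct, and it is a slightly different bookkeeping from the paper's. Take $B=\lceil(18n-23)/7\rceil$ and $n=7m+r$. Your threshold $t=\lfloor(5n-4)/7\rfloor$, i.e.\ $t=5m-1,5m,5m,5m+1,5m+2,5m+3,5m+3$ for $r=0,\dots,6$, satisfies both $\lceil\tfrac32(n+t)\rceil-2\le B$ and $4n-2t-6\le B$ in every residue class. So integrality does absorb the excess $3/7$ of the real crossover value $(18n-20)/7$.

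The paper instead balances in the heart order $h$. Using $k\le n-h$, it writes the few-leaf bound as $\lceil3n-\tfrac32h-2\rceil$ and plays it against the sharper $2n+2h-5$ of \cref{cor:heart-bound}, or $2n+h-3$ of \cref{lem:one-cluster} when one cluster is empty. There the crossover is already exactly $(18n-23)/7$. Your version needs only the coarser \cref{cor:nk-bound}, which mildly simplifies the final step.

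\textbf{The gap is (ii).} It is exactly \cref{cor:nk-bound}, the paper's main new result, and your sketch does not prove it. Your plan is to delete only the leaves, place $A-L(A)$ greedily, and then attach all leaves by a Hall-type matching. The feedback inequalities give many in-neighbours to the left of a single image. They give no Hall condition for all in-leaves at once, because their parents may be embedded anywhere, including near the left end, and they compete with each other and with the image of $A-L(A)$. Nothing in the sketch justifies the charge of $4$ per internal vertex either.

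What the paper actually does is as follows.
\begin{enumerate}
\item It removes the leaf clusters $S^-,S^+$, which contain non-leaf vertices too and cost $2$ per vertex, so that $h\le n-k$ turns $2n+2h-5$ into $4n-2k-5$.
\item It embeds the heart $H$ after a left buffer $\lambda$, grows $S^+$ forward by \cref{lem:forward-greedy}, and grows $S^-$ backward branch by branch in decreasing order of attachment images. Success is proved by a hit-counting contradiction (\cref{lem:fixed-root}).
\item The heart's contribution $3h-3+\lfloor(h+k_H-2)/2\rfloor\le 4h-5$ comes from flattening the downward components into an equivalent out-arborescence with $U$ artificial leaves. The root is a source minimising $\gdown_r(H)$, or dually a sink minimising $\gup_r(H)$, and one uses $\gup_r(H)+\gdown_r(H)\le h+k_H-2$.
\end{enumerate}
The original Dross--Havet form of this argument only reaches $\lceil\tfrac92n-\tfrac52k-\tfrac92\rceil$. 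The improvement rests on the new observation that the online failed-position injection of \cref{lem:greedy-arborescence} also covers the artificial leaves, which shortens the buffer by exactly $U$. The cases $h=2$ and exactly one nonempty cluster (via El Sahili, \cref{lem:one-sided-extension}) need separate treatment. None of this appears in your proposal, so the theorem has been reduced to (ii), not proved.

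Two smaller points. First, the non-bi-arborescence hypothesis does not play the role you describe. Since $4n-2k-4\ge 2n-2$ whenever $k\le n-1$, (ii) would hold for bi-arborescences by (iii) anyway; in the paper the hypothesis only guarantees $h\ge2$. Second, in (iii) the common root cannot simply go to the middle of the order. It must be placed at position $n_I+k_I-1$, so that the in- and out-parts receive the $n_I+k_I-1$ and $n_O+k_O-1$ positions that \cref{lem:greedy-arborescence} and its dual require.
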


For comparison, the values of the previous and new bounds for the first few orders are
\[
\begin{array}{c|rrrrrrr}
 n&2&3&4&5&6&7&8\\ \hline
 \text{Dross--Havet}&3&5&8&11&13&16&19\\
 \text{\cref{thm:main}}&2&5&7&10&13&15&18.
\end{array}
\]
The proof also yields a parameterised improvement: every non-bi-arborescence with $n$ vertices and $k$ leaves is $(4n-2k-4)$-unavoidable. Since such a tree has $k\le n-2$, this is at least one integer below the Dross--Havet many-leaf bound throughout its range.

The argument refines the three-phase median-order embedding in the proof of the many-leaf estimate of Dross and Havet. Components directed towards a chosen root are flattened, and artificial children are introduced to form an equivalent out-arborescence. The greedy embedding of this arborescence leaves one failed position fewer than its number of out-leaves. The original many-leaf count uses the failed positions paired with the original heart leaves. We show that the same online pairing applies after the child identities have been chosen and hence also to the artificial leaves. Each artificial leaf therefore provides both a released image and, with one global exception, a later failed position. This shortens the buffer by precisely the number of artificial leaves.

If the two leaf clusters together have $s$ vertices and the remaining heart has $h$ vertices, the resulting estimate is $\unv(A)\le2s+4h-5=2n+2h-5$ for $h\ge3$. Since all leaves of $A$ lie in the clusters, the few-leaf theorem gives $\unv(A)\le\lceil3n-\frac32h-2\rceil$. The two bounds meet at $h=2(n+3)/7$, which gives \cref{thm:main}. We make the greedy arborescence argument and its failed-position injection self-contained in \cref{sec:prelim}; the refined recovery is proved in \cref{sec:recovery}, and the global bounds are derived in \cref{sec:global}. Two additional refinements are recorded in \cref{sec:sharing,sec:brooms}.

\section{Preliminaries}\label{sec:prelim}

All digraphs are finite and have neither loops nor parallel arcs. For a vertex $v$ of a digraph $D$, write $\Nplus_D(v)$ and $\Nminus_D(v)$ for its out- and in-neighbourhoods. A leaf of an oriented tree $A$ is an \emph{in-leaf} if it has indegree $0$ and outdegree $1$, and an \emph{out-leaf} if it has indegree $1$ and outdegree $0$. We write $\Lminus(A)$, $\Lplus(A)$, and $L(A)$ for the corresponding sets.

An ordering $\sigma=(v_1,\ldots,v_m)$ of a tournament $T$ is a \emph{median order} if it maximises the number of forward arcs. It is a \emph{local median order} if, for every $1\le i<j\le m$,
\begin{align}
 |\Nplus_T(v_i)\cap\{v_{i+1},\ldots,v_j\}|&\ge \frac{j-i}{2},\label{eq:feedback-forward}\\
 |\Nminus_T(v_j)\cap\{v_i,\ldots,v_{j-1}\}|&\ge \frac{j-i}{2}.\label{eq:feedback-backward}
\end{align}
Every median order is local, every interval of a median order is a median order of the induced subtournament, and every interval of a local median order is local; see~\cite{HavetThomasse2000,DrossHavet2021}. A \emph{final interval} of an order is an interval containing its last position, and it is \emph{proper} if it is not the whole order. If $\tau=(v_a,\ldots,v_b)$ is an interval of a local median order and $D$ is a rooted out-arborescence whose root is mapped to $v_a$, we call an embedding $\phi$ \emph{root-forward in $\tau$} if every proper final interval $J$ of $\tau$ satisfies $|\phi(V(D))\cap J|<|J|/2+1$. This is the exact invariant needed when the ambient interval has length $2|D|-2$.

The next lemma is the greedy argument of Dross and Havet~\cite[Theorem~12 and Observation~13]{DrossHavet2021}. We include the proof because its online form is the point at which the artificial leaves enter our argument.

\begin{lemma}\label{lem:greedy-arborescence}
Let $B$ be an out-arborescence with $n_B\ge2$ vertices, $k_B$ out-leaves, and root $r$, and let $\sigma=(v_1,\ldots,v_{n_B+k_B-1})$ be a local median order of a tournament. The greedy procedure that sends $r$ to $v_1$, scans the positions from left to right, and at the image of a vertex assigns as many of its unembedded children as possible, in order, to the first currently available out-neighbours to its right, succeeds. It leaves exactly $k_B-1$ failed positions. Moreover, there is an injection from the failed positions to $\Lplus(B)$ such that the image of the assigned leaf precedes the corresponding failed position. Consequently, all but at most one out-leaf have distinct failed mates after their images.

The identities and order of the children of a vertex may be chosen after their complete image set has been exposed, provided the choice is made before any position in that image set is scanned.
\end{lemma}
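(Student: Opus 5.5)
The plan is a counting argument organised around ``ticket'' positions, followed by an online pairing. Write $m=n_B+k_B-1$. Call a position $v_j$ \emph{failed} if, when it is scanned, it is not the image of any vertex of $B$. Otherwise it is scanned as the image of some vertex $x$, and $x$ receives children.

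\textbf{Step 1: the greedy never gets stuck.} I would prove this by induction on the scan. Suppose that when scanning $\phi(x)=v_i$ some child of $x$ cannot be placed. Then every out-neighbour of $v_i$ in $\{v_{i+1},\dots,v_m\}$ is already used. By \eqref{eq:feedback-forward} applied with $j=m$, $v_i$ has at least $(m-i)/2$ out-neighbours to its right. All of them are images of vertices of $B$ embedded so far, and these all lie to the right of $v_i$. Now count. Each position before $v_i$ is either failed or the image of a vertex. Each vertex scanned so far has had all its children placed, and the number of vertices placed to the right relates to the scanned prefix. Let $a$ be the number of non-failed positions among $v_1,\dots,v_{i-1}$. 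Then the number of embedded vertices is at most $1+\sum$ (children of scanned vertices), which is less than $n_B$.

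The key inequality to establish is a potential argument. Let $F_i$ be the number of failed positions among $v_1,\dots,v_i$. The invariant I would maintain is $F_i\le$ (the number of out-leaves among the vertices scanned up to $v_i$) $-1$ once some leaf has been scanned. Together with the root-forward invariant, $|\phi\cap J|<|J|/2+1$ on final intervals, this yields enough out-neighbours: the occupied positions to the right of $v_i$ number fewer than $(m-i)/2+1$ minus the demand of $x$. I would prove this invariant using \eqref{eq:feedback-backward}, applied to the interval ending at the leftmost free position.

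\textbf{Step 2: the failed-position count.} At the end every vertex has been embedded. The number of scanned positions that are images is exactly $n_B$, so the number of failed positions is exactly $m-n_B=k_B-1$.

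\textbf{Step 3: the injection.} I would construct it online. When a failed position $v_j$ is scanned, I would argue, via the potential inequality of Step 1, that the number of out-leaves already scanned strictly exceeds the number of failed positions in $v_1,\dots,v_{j-1}$. Hence some out-leaf with image before $v_j$ is still unmatched, and I match $v_j$ to the earliest such leaf. This gives the injection with ``leaf image precedes failed mate''. Since there are $k_B-1$ failed positions and $k_B$ leaves, all but at most one out-leaf receive a mate.

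\textbf{Step 4: the final clause.} Everything above uses only the set of images assigned to the children of $x$ at the moment $\phi(x)$ is scanned, together with which of those images become leaves before they are themselves scanned. The counts in Steps 1 through 3 are therefore invariant under relabelling the children within that image set, provided the relabelling happens before any of those positions is scanned. The main obstacle I expect is Step 1, specifically the precise potential that simultaneously guarantees room for children and the surplus of leaves over failures. I would first try the invariant ``failures so far $\le$ leaves scanned so far $-1$'', and derive room from \eqref{eq:feedback-forward} on $[i,m]$.
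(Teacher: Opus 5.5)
\textbf{There is a genuine gap in Step~1.} Step~1 is where the entire content of the lemma lives: the success of the greedy, and, through Step~3, the injection. It is not carried out, and both ingredients you propose for it are wrong in this setting.

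First, the invariant $F_i\le(\text{leaves scanned up to }v_i)-1$ is false. Take the tournament on $v_1,\dots,v_6$ in which every arc goes forward except $v_3\to v_1$. Since $v_1v_2v_3$ is a directed triangle, this order has the minimum possible number of backward arcs, so it is a median order. For the out-star with three leaves, the greedy puts the leaves on $v_2,v_4,v_5$. So $v_3$ fails with $F_3=1$ after only one leaf has been scanned. The correct statement, and the one your Step~3 actually uses, is the Hall-type condition $F_i\le|\phi(\Lplus(B))\cap\{v_1,\dots,v_{i-1}\}|$ at every failed $v_i$.

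Second, the root-forward invariant belongs to the $(2N-2)$-regime of \cref{lem:forward-greedy} and does not hold here. At the end, $n_B$ images occupy $n_B+k_B-1$ positions, and the final interval $J=\{v_2,\dots,v_m\}$ has $|\phi\cap J|=n_B-1\ge|J|/2+1$ whenever $n_B\ge k_B+2$. This already happens for a directed path on three vertices. So your proposed source of free out-neighbours disappears, and nothing else in the sketch bounds the number of occupied positions to the right of $v_i$ at the moment the greedy gets stuck.

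\textbf{The missing idea is a local charging argument at each failed position.} The paper uses it for both conclusions.
\begin{enumerate}
\item Call a vertex active at $i$ if its image is at or before $v_i$ and some child is unembedded or placed after $v_i$. For a failed $v_i$ with an active vertex, let $v_\ell$ be the last active image before $v_i$, and set $I_i=\{v_{\ell+1},\dots,v_i\}$.
\item The first-available rule forces every out-neighbour of $v_\ell$ in $I_i$ to be hit. The preimage of such a position is not active, so (look at the last embedded vertex on a path to a missing descendant) its whole subtree lies in $I_i$ and supplies an out-leaf image there.
\item These preimages are pairwise incomparable, because each was placed by the end of the scan of $v_\ell$, so its father's image is at or before $v_\ell$. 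Hence the leaves they supply are distinct.
\item Failed positions in $I_i$ are in-neighbours of $v_\ell$, so \eqref{eq:feedback-forward} gives $|F\cap I_i|\le|\phi(\Lplus(B))\cap I_i|$. The intervals $I_i$ form a laminar family.
\item If the run left a vertex unembedded, every failed position would have an active vertex and $|F|\ge k_B$. Summing over maximal intervals would give $|F|\le k_B-1$, since an unembedded vertex has an unembedded leaf below it. This contradiction gives success.
\item In a successful run, scan the failed positions from left to right. The same inequality on $I_i$ yields an unused leaf image before $v_i$. Failed positions without an active vertex come after all of $B$ and can take any unused leaf.
\end{enumerate}
Only the forward inequality is needed, not \eqref{eq:feedback-backward}.

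Your Steps~2 and~4 are fine once this is in place. For Step~4, the precise reason is that the positions given to a vertex's children depend only on how many children there are, so the adaptive run is a genuine greedy run for a relabelled copy of $B$.
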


\begin{proof}
Call a position \emph{hit} once it is assigned to a vertex of $B$, and \emph{failed} if it is unhit when scanned. When a hit position $v_i$ is scanned, assign as many unembedded children of its preimage as possible to the first currently unhit out-neighbours of $v_i$ to its right. The state \emph{at} $i$ means the state immediately before $v_i$ is scanned. Let $F$ be the set of failed positions in the completed execution, let $W$ be the set of vertices eventually embedded, let $\phi:W\to V(\sigma)$ be the resulting partial embedding, and let $L=\Lplus(B)\cap W$. A vertex $x\in W$ is \emph{active at} $i$ if its image has index at most $i$ and some child of $x$ is unembedded or has image after $v_i$.

We first record two claims that apply to every failed position at which an active vertex exists. Let $\ell_i<i$ be the largest index whose preimage is active at $i$, and let $I_i=\{v_j:\ell_i<j\le i\}$.

\smallskip
\noindent\emph{Claim 1.} For every such $v_i$,
\begin{equation}\label{eq:local-failure-leaf}
 |F\cap I_i|\le |\phi(L)\cap I_i|.
\end{equation}

Indeed, every out-neighbour of $v_{\ell_i}$ in $I_i$ is hit. Otherwise, when $v_{\ell_i}$ was scanned, the greedy rule would have used that position before leaving a child unembedded or placing one after $v_i$. Hence $F\cap I_i\subseteq\Nminus(v_{\ell_i})\cap I_i$. If $v_j\in\Nplus(v_{\ell_i})\cap I_i$, then its preimage is not active at $i$. Moreover, no descendant is unembedded or has image after $v_i$: otherwise, on a path to such a descendant, the last vertex embedded at or before $v_i$ would be active at $i$ and would have image after $v_{\ell_i}$. Since every descendant is placed to the right of $v_j$, the whole descendant subtree is embedded in $I_i$ and contains an out-leaf whose image lies there. These leaves may be chosen distinctly. Indeed, $v_j$ was hit no later than the end of the scan of $v_{\ell_i}$, so the father of its preimage has image at or before $v_{\ell_i}$; consequently, no two of these preimages are comparable in the rooted order. By \eqref{eq:feedback-forward}, $|\Nminus(v_{\ell_i})\cap I_i|\le|\Nplus(v_{\ell_i})\cap I_i|$, which proves the claim.

\smallskip
\noindent\emph{Claim 2.} The intervals $I_i$ arising from failed positions with an active vertex form a laminar family.

Suppose that $i<j$ and the two intervals overlap without one containing the other. Then $\ell_i<\ell_j<i$. The preimage of $v_{\ell_j}$ is not active at $i$, so all its children are already embedded at or before $v_i$; it cannot become active at $j$, a contradiction.

Suppose that the procedure fails to embed all of $B$. Then $|F|\ge k_B$, since at most $n_B-1$ of the $n_B+k_B-1$ positions are hit. Moreover, every failed position has an active vertex. Indeed, fix an unembedded vertex and follow its path from the root; the first vertex that is unembedded or has image after the current failed position has a father embedded at or before that position, and this father is active. Since an unembedded vertex has an unembedded out-leaf descendant, $|L|\le k_B-1$. The maximal intervals in Claim~2 are disjoint and cover $F$; summing \eqref{eq:local-failure-leaf} over them gives $|F|\le|L|\le k_B-1$, a contradiction. Thus the procedure succeeds and leaves exactly $k_B-1$ failed positions.

It remains to construct the injection. Once a failed position occurs with no active vertex, the whole arborescence has already been embedded before that position; hence no later failed position has an active vertex. Thus the failed positions with an active vertex form a prefix of $F$, and those without one form a suffix. Process the first class from left to right. At a failed position $v_i$, Claim~1 supplies an unused leaf image in $I_i$: any leaf image in $I_i$ used earlier was assigned to an earlier failed position that also lies in $I_i$, while the current position is counted in $F\cap I_i$. After these positions have been treated, assign the remaining failed positions from left to right to arbitrary unused out-leaves. All of $B$ precedes the first of them, and there are only $k_B-1$ failed positions altogether, so this is always possible. In every case the assigned leaf image precedes the failed position.

Finally, when a parent is scanned, the set of positions chosen for its children depends only on their number, not on their identities. These positions lie strictly to the right and are still unscanned. Their identities and order may therefore be fixed after the complete set has been exposed but before any member of it is scanned. Once all choices are fixed, the execution is exactly the greedy execution for the resulting labelled arborescence, so the preceding proof applies to its final leaf set.
\end{proof}

A \emph{bi-arborescence} is a rooted oriented tree that is the union of an in-arborescence and an out-arborescence meeting only in their common root. We shall use the following direct consequence of \cref{lem:greedy-arborescence}.

\begin{lemma}\label{lem:bi-arborescence}
Every bi-arborescence on $n\ge2$ vertices is $(2n-2)$-unavoidable.
\end{lemma}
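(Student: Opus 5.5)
The plan is to split the tournament into two halves using a median order. Each half then receives one side of the bi-arborescence, and \cref{lem:greedy-arborescence} is applied separately to the out-part and, after reversing arcs, to the in-part.

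Let $A$ be a bi-arborescence with root $r$. It is the union of an out-arborescence $B^+$ with $n^+$ vertices and an in-arborescence $B^-$ with $n^-$ vertices, both rooted at $r$, so $n=n^++n^--1$. Let $k^+$ and $k^-$ be the numbers of out-leaves of $B^+$ and in-leaves of $B^-$. If $B^-$ is trivial ($n^-=1$), then $A=B^+$ is an out-arborescence. Since $k^+\le n-1$, we have $n+k^+-1\le 2n-2$. Applying \cref{lem:greedy-arborescence} to the first $n+k^+-1$ vertices of a median order of any tournament $T$ on $2n-2$ vertices gives a copy of $A$. An initial interval of a median order is again a median order, so the lemma applies there. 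The case $n^+=1$ is symmetric: reverse all arcs of $T$ and of $A$, and note that the reverse of a median order is a median order of the reversed tournament.

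In the general case take a median order $\sigma=(v_1,\dots,v_{2n-2})$ of $T$. Send $r$ to a vertex $v_p$, where $p$ is chosen so that
\begin{itemize}
\item the final interval $(v_p,\dots,v_{2n-2})$ has length at least $n^++k^+-1$, and
\item the initial interval $(v_1,\dots,v_p)$ has length at least $n^-+k^--1$.
\end{itemize}
The two lengths sum to $2n-1$, counting $v_p$ twice. Both conditions can hold at once exactly when
\[
 (n^++k^+-1)+(n^-+k^--1)\le 2n-1 = 2n^++2n^--3,
\]
that is, when $k^++k^-\le n^++n^--1$. This holds because $k^+\le n^+-1$ and $k^-\le n^--1$. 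Then:
\begin{itemize}
\item apply \cref{lem:greedy-arborescence} to $B^+$ on the interval starting at $v_p$, which is a median order of the induced subtournament, with the root sent to its first vertex $v_p$;
\item apply the reversed version of the lemma to $B^-$ on the initial interval ending at $v_p$, with its root sent to the last vertex $v_p$.
\end{itemize}

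The two embeddings use disjoint vertex sets apart from $v_p$: every vertex of $B^+$ is placed at or after $v_p$, and every vertex of $B^-$ at or before it. Their union is therefore a copy of $A$. The main obstacle is making sure both halves can share the single vertex $v_p$ in the worst case where the slack is tight. The counting above gives a surplus of $(n^+-1-k^+)+(n^--1-k^-)\ge0$, so some common $p$ exists. The other point to verify is the direction convention of the reversed lemma: reversing a median order of $T$ gives a median order of the reversed tournament, and there $B^-$ becomes an out-arborescence rooted at the first vertex of the reversed interval.
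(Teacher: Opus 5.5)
Your proof is correct and is essentially the paper's argument: the paper also handles the case of a trivial side by \cref{lem:greedy-arborescence} and duality, and otherwise truncates to the first $n+k^++k^--2$ positions, puts the common root at the specific position $p=n^-+k^--1$, and applies the dual lemma and the lemma on the two intervals that meet there. One detail to make explicit: \cref{lem:greedy-arborescence} is stated for an order of exact length $n_B+k_B-1$, so apply it to $(v_p,\dots,v_{p+n^++k^+-2})$ and $(v_{p-n^--k^-+2},\dots,v_p)$, which are again median orders since they are intervals of one.
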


\begin{proof}
If one of the two arborescences consists only of the common root, \cref{lem:greedy-arborescence} and directional duality give $\unv(A)\le n+k-1\le2n-2$. Otherwise, let their orders be $n_I,n_O\ge2$ and their numbers of leaves be $k_I,k_O$. Let $q=n+k_I+k_O-2$. Given a tournament on $2n-2$ vertices, take a local median order and retain its first $q$ positions; this is possible since $k_I\le n_I-1$, $k_O\le n_O-1$, and hence $q\le2n-3$. Place the common root at position $n_I+k_I-1$. The interval ending there has length $n_I+k_I-1$, and the interval beginning there has length $n_O+k_O-1$. Apply the directional dual of \cref{lem:greedy-arborescence} to the first interval and the lemma itself to the second. They meet only at the common root, so the embeddings combine. This also makes explicit the monotonicity from order $q$ to order $2n-2$.
\end{proof}

We next recall the rooted parameters of Dross and Havet. For a rooted oriented tree $(A,r)$, an arc is \emph{upward} if it points away from $r$ and \emph{downward} if it points towards $r$. The nontrivial components of the corresponding forests are denoted by $\cupward_r(A)$ and $\cdown_r(A)$. Define
\[
 \gup_r(A)=\sum_{C\in\cupward_r(A)}(|C|+|\Lplus(C)|-2),\qquad
 \gdown_r(A)=\sum_{C\in\cdown_r(A)}(|C|+|\Lminus(C)|-2).
\]
We use the following facts from~\cite[Section~4, proof of Theorem~4]{DrossHavet2021}.

\begin{lemma}\label{lem:gamma-facts}
Let $A$ be an oriented tree with $n$ vertices and $k$ leaves. For every root $r$, $\gup_r(A)+\gdown_r(A)\le n+k-2$. Moreover, a root minimising $\gdown_r(A)$ may be chosen as a source, and a root minimising $\gup_r(A)$ may be chosen as a sink.
\end{lemma}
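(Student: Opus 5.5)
Since \cref{lem:gamma-facts} is recorded from~\cite{DrossHavet2021}, I would reprove it directly from the definitions in two independent parts: the inequality $\gup_r(A)+\gdown_r(A)\le n+k-2$ for an arbitrary root, and the localisation of minimisers at a source or a sink.

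\textbf{The inequality.} The upward and downward arcs partition the $n-1$ arcs of $A$. A component $C$ of either forest is a subtree, so $|C|-1$ counts its arcs. Hence
\[
 \sum_{C\in\cupward_r(A)\cup\cdown_r(A)}(|C|-1)=n-1,
\]
and therefore
\[
 \gup_r(A)+\gdown_r(A)=n-1-c+\sum_{C\in\cupward_r(A)}|\Lplus(C)|+\sum_{C\in\cdown_r(A)}|\Lminus(C)|,
\]
where $c$ is the total number of nontrivial components. It then suffices to show that the leaf sum is at most $k+c-1$.

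To do this I would charge component leaves. An upward component $C$ is an out-arborescence whose root is its vertex closest to $r$; the out-leaves of $C$ are its vertices other than that root that have no upward child. Each such vertex $v$ either is a leaf of $A$ or has a downward arc to a child, so it is the top vertex of some downward component. The same holds with upward and downward exchanged. This yields an injection from non-$A$-leaf component leaves into ``tops'' of components. Every component except one containing $r$ has its top at a vertex $v\ne r$, and each such top is charged at most once per component. Moreover, $r$ itself is the top of at least one component and is never charged. So the non-$A$-leaf contributions number at most $c-1$.

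Each leaf of $A$ other than $r$ is a component leaf of at most one component, namely the one containing its unique arc. If $r$ is a leaf of $A$, it is the root of its component, not a leaf of it. This gives the bound $k+c-1$ and hence the inequality. The step needing most care is showing that distinct component leaves charge distinct tops. A vertex $v$ may be the top of both an upward and a downward component, while being a leaf of only one component (the one containing its parent arc). So the charging is injective into the set of pairs (vertex, component it tops), excluding the components topped by $r$.

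\textbf{Localisation of minimisers.} For the second part, take a root $r$ minimising $\gdown_r$. If $r$ is not a source, it has an in-neighbour $u$, and I would compare $\gdown_u$ with $\gdown_r$. Rerooting across the arc $ur$ changes only the status of that arc: it is downward from $r$ and upward from $u$. Every other arc keeps its orientation relative to the root, since both roots lie on the same side of it. So the downward forest from $u$ is that from $r$ minus the arc $ur$. Removing an arc from a downward component $C$ splits it into pieces whose values of $|C'|+|\Lminus(C')|-2$ sum to at most the original value. The component containing $ur$ loses $u$ as an in-leaf if $u$ was one, so the checking here is a short case analysis on whether $u$ had other downward arcs. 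Hence $\gdown_u\le\gdown_r$.

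Iterating this step moves the root strictly backward along arcs, and since $A$ is acyclic as a tree the process terminates at a source. The argument for $\gup$ and a sink is directionally dual. I expect the case analysis in the splitting step, especially when a piece becomes trivial and contributes $0$ rather than a negative amount, to be the only delicate point.
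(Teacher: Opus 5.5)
Your proof is correct. The paper gives no argument of its own for this lemma; it imports it from Dross and Havet. Your two-part argument is therefore a self-contained verification rather than a competing route. Two small points tighten it.

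\emph{The inequality.} The worry about a vertex topping both an upward and a downward component only arises at $r$. For $v\ne r$, the arcs at $v$ of the same type as its parent arc all lie in the component containing that parent arc. So $v$ tops at most one component, and the charging is simply $v\mapsto$ the component topped at $v$. The count also tacitly needs $n\ge2$, used when you say $r$ tops at least one component; for $n=1$ the inequality would read $0\le-1$.

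\emph{The rerooting step.} The case analysis closes because $r$ is the top of the component $C_0$ containing $u\to r$, so deleting that arc creates no new in-leaf. If $r$ has a second in-neighbour, the contribution of $C_0$ drops by exactly $2$. Either both pieces are nontrivial and the constant $-2$ is paid twice, or the piece at $u$ is trivial and $C_0$ loses both the vertex $u$ and the in-leaf $u$. If $r$ has no second in-neighbour, the contribution drops by exactly $1$. Thus $\gdown_u(A)<\gdown_r(A)$ strictly. In fact every minimiser of $\gdown$ is a source, and dually every minimiser of $\gup$ is a sink.
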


For later use, we recall the equivalent-arborescence construction~\cite[Lemma~15]{DrossHavet2021}. Suppose $r$ is a source. For each $C\in\cdown_r(A)$, let $f_C$ be the father of its root, let $n_C=|C|$ and $q_C=|\Lminus(C)|$, delete the arcs of $C$, and make every vertex of $C$, together with $q_C-1$ new vertices, a child of $f_C$. The resulting out-arborescence $A'$ has $n+\sum_C(q_C-1)$ vertices. Its number $K(A')$ of out-leaves satisfies
\begin{equation}\label{eq:equivalent-leaves}
 K(A')\le |\Lplus(A)|+\sum_C\bigl((n_C-q_C)+|C\cap\Lminus(A)|+(q_C-1)\bigr)
 \le k+\sum_C(n_C-1).
\end{equation}
Indeed, an original vertex outside the downward components can be an out-leaf of $A'$ only if it was already an out-leaf of $A$. Inside a component $C$, at most $n_C-q_C$ vertices outside $\Lminus(C)$ become out-leaves; if a vertex of $\Lminus(C)$ becomes an out-leaf, then it has no upward child and is an in-leaf of $A$. The remaining $q_C-1$ possible leaves are artificial. When the complete image set of the children of a vertex is exposed, reserve disjoint blocks of the prescribed sizes $n_C+q_C-1$, one for each downward component $C$ attached there, and use the remaining singleton positions for the unchanged children. In each block, take a local median order of the induced subtournament and apply the directional dual of \cref{lem:greedy-arborescence} to embed $C$, leaving $q_C-1$ positions artificial. The online clause of \cref{lem:greedy-arborescence} permits all these identities to be fixed before any child image is scanned. Restricting the resulting embedding to $A$ gives $\unv(A)\le n+k-1+\gdown_r(A)$.

We also use a consequence of El Sahili's theorem. If $M=(v_1,\ldots,v_m)$ is a median order of a tournament, an embedding $\phi$ of $D$ is an $M$-embedding if every final interval $I$ of $M$ satisfies $|\phi(V(D))\cap I|<|I|/2+1$. El Sahili proved that every oriented tree on $q\ge2$ vertices has such an embedding in every median order of every tournament on $3q-3$ vertices and that adding a forward leaf requires only two additional final positions~\cite{ElSahili2004}.

\begin{lemma}\label{lem:one-sided-extension}
Let $H$ be an oriented tree on $h\ge2$ vertices. If $A$ is obtained from $H$ by successively adding $t$ leaves, each joined by an arc from the old tree to the new vertex, then $\unv(A)\le3h-3+2t$. The same holds with all arcs reversed.
\end{lemma}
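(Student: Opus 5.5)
We prove \cref{lem:one-sided-extension} by induction on $t$, using El Sahili's theorem in its median-order form. The key is to carry the stronger statement that $A$ has an $M$-embedding in every median order $M$ of every tournament on $3h-3+2t$ vertices, not merely a copy. For $t=0$ this is exactly El Sahili's theorem applied to $H$, which has $h\ge2$ vertices.

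For the inductive step, let $A_{t}$ be obtained from $A_{t-1}$ by adding a leaf $y$ with an arc $x\to y$, where $x\in V(A_{t-1})$. Let $T$ be a tournament on $m=3h-3+2t$ vertices and $M=(v_1,\ldots,v_m)$ a median order. The prefix $M'=(v_1,\ldots,v_{m-2})$ is an interval of $M$, hence a median order of the subtournament it induces. By induction there is an $M'$-embedding $\phi$ of $A_{t-1}$ in $M'$. We then use El Sahili's forward-leaf extension: the two additional final positions $v_{m-1},v_m$, together with the final-interval invariant, allow us to place $y$ at an out-neighbour of $\phi(x)$ outside $\phi(V(A_{t-1}))$ while preserving the invariant for $M$. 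This is exactly the statement "adding a forward leaf requires only two additional final positions" quoted above, so it suffices to check that the invariant transfers.

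The one step needing care is this transfer. For a final interval $I$ of $M$ we have $|I\cap\{v_{m-1},v_m\}|\le 2$. If $I$ contains both new positions, let $I'=I\setminus\{v_{m-1},v_m\}$, which is a final interval of $M'$ (possibly empty). Then $|\phi(A_t)\cap I|\le|\phi(A_{t-1})\cap I'|+1<|I'|/2+2=|I|/2+1$. If $I=\{v_m\}$, the bound $|\phi(A_t)\cap I|\le1<3/2$ holds trivially. The argument showing that $\phi(x)$ has an unused out-neighbour to its right is the median-order counting of El Sahili: applying \eqref{eq:feedback-forward} to the interval from $\phi(x)$ to $v_m$ and combining it with the invariant on that final interval gives strictly more out-neighbours than occupied vertices. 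I expect this counting to be the main obstacle if it has to be redone. Since the paper cites it as El Sahili's result, I would invoke it directly.

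Iterating over $t$ steps gives $\unv(A)\le 3h-3+2t$. For the reversed statement, we apply the result to the converse tree $A^{\ast}$ in the converse tournament $T^{\ast}$. A tournament contains $A$ if and only if its converse contains $A^{\ast}$, and $A^{\ast}$ is obtained from $H^{\ast}$ by adding forward leaves, so the same bound follows.
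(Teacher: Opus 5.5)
Your proposal is correct and takes the same route as the paper. Your prefix induction unrolls to the paper's scheme: El Sahili's theorem embeds $H$ in the first $3h-3$ positions, each forward leaf is then added using two new final positions, and converse duality handles the reversed case. Your explicit check that the final-interval invariant transfers, and your out-neighbour count (the same computation as in \cref{lem:forward-greedy}), just make self-contained what the paper cites from El Sahili.
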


\begin{proof}
Take a median order of a tournament on $3h-3+2t$ vertices. Its first $3h-3$ positions form a median order of the induced subtournament, where El Sahili's theorem embeds $H$. This is also an embedding relative to the full order: appending unused final positions only lengthens every relevant final interval and hence weakens the defining inequality. Apply El Sahili's leaf-extension proposition successively, using the next two final positions at each step. Directional duality gives the reversed statement.
\end{proof}

We finish with the leaf-cluster decomposition from~\cite[Section~5, proof of Theorem~6]{DrossHavet2021}. Let $A$ be an oriented tree that is not a bi-arborescence. Its \emph{out-leaf cluster} $S^+$ is defined recursively: every out-leaf belongs to $S^+$, and a vertex with exactly one in-neighbour and all its out-neighbours in $S^+$ also belongs to $S^+$. Define the \emph{in-leaf cluster} $S^-$ dually. Then $S^-\cap S^+=\varnothing$, $A[S^-]$ is a forest of in-arborescences, and $A[S^+]$ is a forest of out-arborescences. The \emph{heart} is $H=A-(S^-\cup S^+)$. Write $h=|H|$, $k_H=|L(H)|$, $s^-=|S^-|$, $s^+=|S^+|$, and $s=s^-+s^+$.

The heart has at least two vertices. Indeed, every component of $A[S^+]$ sends no arc outside $S^+$ and has at most one entering arc, while every component of $A[S^-]$ receives no arc from outside $S^-$ and has at most one leaving arc. By the recursive definitions, any such external arc is incident with the root of the corresponding arborescence component: a non-root vertex already has its unique in-neighbour in an out-component, and dually its unique out-neighbour in an in-component. If $H=\varnothing$ and one cluster is empty, then $A$ is an arborescence. If both clusters are nonempty, the component graph is connected and every component has degree at most one, so it consists of one component of each type joined by a unique arc $u\to w$, where $u$ and $w$ are their respective roots. Absorbing this arc into the out-side gives an out-arborescence rooted at $u$, while the minus-component is an in-arborescence with the same root; hence $A$ is a bi-arborescence. If $H$ has one vertex, any edge from a minus-component to a plus-component would isolate that pair from the heart, so every cluster component is adjacent directly to the heart vertex. Adjoining that vertex to all components of $S^-$ gives an in-arborescence and adjoining it to all components of $S^+$ gives an out-arborescence with the same root, again a bi-arborescence. Every out-leaf of $H$ has an in-neighbour in $S^-$, and every in-leaf of $H$ has an out-neighbour in $S^+$. Hence $s^-\ge|\Lplus(H)|$ and $s^+\ge|\Lminus(H)|$. All leaves of $A$ lie in $S^-\cup S^+$, so if $A$ has $k$ leaves, then $k\le s=n-h$.

\begin{lemma}\label{lem:forward-greedy}
Let $R$ be an out-arborescence on $N\ge2$ vertices, and let $\tau=(w_1,\ldots,w_{2N-2})$ be a local median order of a tournament. Send the root to $w_1$. At each subsequent step, choose an arbitrary unembedded child of any embedded vertex and send it to the first unused out-neighbour of its father's image to the right. The process succeeds for every sequence of such choices. More precisely, after $q\ge2$ vertices have been embedded, if $R_q$ is the current sub-arborescence and $\phi$ the current embedding, then $\phi(V(R_q))\subseteq\{w_1,\ldots,w_{2q-2}\}$ and $\phi$ is root-forward in $\tau_q=(w_1,\ldots,w_{2q-2})$.
\end{lemma}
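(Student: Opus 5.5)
We prove the lemma by induction on $q$, carrying the stronger statement as the inductive hypothesis: after $q\ge2$ vertices have been embedded, all images lie in $\tau_q=(w_1,\ldots,w_{2q-2})$ and $\phi$ is root-forward in $\tau_q$. For $q=N$ this contains the success claim. The required vertex always exists: at the moment a child $c$ of $x$ is chosen, $\phi(x)$ lies in $\tau_{q}$ and $q\le N-1$. The inductive step will show that an unused out-neighbour of $\phi(x)$ to its right can be found inside $\tau_{q+1}$.

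\emph{Base case} $q=2$. The root sits at $w_1$, and its first chosen child goes to the first out-neighbour of $w_1$ to the right. By \eqref{eq:feedback-forward} with $i=1$, $j=2$, we get $|\Nplus(w_1)\cap\{w_2\}|\ge 1/2$, so $w_1\to w_2$ and the child lands at $w_2$. The only proper final interval of $\tau_2$ is $\{w_2\}$, and it contains $1<3/2$ images. So root-forwardness holds.

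\emph{Inductive step.} Suppose the statement holds at $q$, and the next chosen vertex is a child $c$ of $x$, with $\phi(x)=w_a$. Let $J$ be the final interval $\{w_{a+1},\ldots,w_{2q}\}$ of $\tau_{q+1}$. Applying \eqref{eq:feedback-forward} to $w_a$ and $w_{2q}$ gives $|\Nplus(w_a)\cap J|\ge(2q-a)/2$.

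If $a=1$, then $J$ has $2q-1$ positions. It contains at most $q-1$ images, since only the root lies outside $J$. Hence $|\Nplus(w_1)\cap J|\ge q-\tfrac12>q-1$, and an unused out-neighbour exists in $J$.

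If $a\ge2$, then $\{w_a,\ldots,w_{2q-2}\}$ is a proper final interval of $\tau_q$. By root-forwardness it contains fewer than $(2q-1-a)/2+1$ images, including $w_a$ itself. So the images in $J$ number fewer than $(2q-1-a)/2$. Since image counts are integers, the number of used out-neighbours of $w_a$ in $J$ is strictly less than $(2q-a)/2$. Hence a free out-neighbour exists, and the first one, $w_b$, satisfies $b\le 2q$.

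It remains to verify root-forwardness in $\tau_{q+1}$, which is the main obstacle. The new image set is the old set together with $w_b$, and $\tau_{q+1}$ has two extra final positions $w_{2q-1},w_{2q}$. Take a proper final interval $I=\{w_j,\ldots,w_{2q}\}$ with $j\ge2$.

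\emph{Case $j\ge2q-1$.} Then $|I|\le2$ and $I$ holds at most $|I|$ images. This is below $|I|/2+1$ unless $|I|=2$ and both $w_{2q-1}$ and $w_{2q}$ are used. But only $w_b$ can lie beyond $w_{2q-2}$, since all earlier images are in $\tau_q$. So at most one image lies in $I$, and the bound holds.

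\emph{Case $j\le 2q-2$.} Then $I\cap\tau_q$ is a proper final interval of $\tau_q$ of length $|I|-2$. It has fewer than $(|I|-2)/2+1=|I|/2$ old images, so $I$ has fewer than $|I|/2+1$ images after adding $w_b$. This works regardless of where $b$ lies.

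Thus the two extra positions absorb exactly the one new image, which is why the interval length is $2N-2$. One still has to check that the integer-rounding in the $a\ge2$ case is correct; that is the only delicate arithmetic.
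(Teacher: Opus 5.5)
Your proof is correct and follows essentially the same route as the paper: induction on $q$, with root-forwardness on a proper final interval of $\tau_q$ combined with \eqref{eq:feedback-forward} on $(w_a,\ldots,w_{2q})$ to find a free out-neighbour by index $2q$, and the observation that the two new final positions absorb the single new image. The differences are cosmetic: you apply root-forwardness to the interval starting at $w_a$ rather than $w_{a+1}$, and you treat $a=1$ separately. The rounding you flag at the end needs no further check, since having fewer than $(2q-1-a)/2<(2q-a)/2\le|\Nplus(w_a)\cap J|$ used positions in $J$ already suffices.
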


\begin{proof}
For $q=2$, \eqref{eq:feedback-forward} applied to $(w_1,w_2)$ gives $w_1\to w_2$, and the only proper final interval of $\tau_2$ is $\{w_2\}$. Suppose the assertion holds after $q<N$ vertices have been embedded. Choose the next child arbitrarily, write $\phi$ for the current embedding, and suppose its father is mapped to $w_i$. If $i<2q-2$, root-forwardness applied to the proper final interval $\{w_{i+1},\ldots,w_{2q-2}\}$ gives
\[
 |\phi(V(R_q))\cap\{w_{i+1},\ldots,w_{2q-2}\}|<\frac{2q-2-i}{2}+1=\frac{2q-i}{2}.
\]
The same inequality is immediate when $i=2q-2$, since its left-hand side is zero. By \eqref{eq:feedback-forward} applied to $(w_i,\ldots,w_{2q})$, the vertex $w_i$ has at least $\lceil(2q-i)/2\rceil$ out-neighbours to its right in that interval, whereas the displayed strict inequality and integrality show that at most $\lceil(2q-i)/2\rceil-1$ of those positions are already used. Thus an unused out-neighbour exists, and the first one lies among $w_{i+1},\ldots,w_{2q}$. The enlarged embedding is therefore contained in $\tau_{q+1}$.

After placing the new child, the proper final intervals contained in $\{w_{2q-1},w_{2q}\}$ satisfy the required inequality directly, since at most one of these two positions is used. Every other proper final interval of $\tau_{q+1}$ is obtained from a proper final interval of $\tau_q$ by adding two positions and at most one image, so the strict inequality is preserved. This proves the assertion for $q+1$.
\end{proof}

\section{Recovering the artificial leaves}\label{sec:recovery}

We refine the three-phase embedding in the proof of~\cite[Theorem~6, equations~(3)--(6)]{DrossHavet2021}. Fix a non-bi-arborescence $A$ for which both $S^-$ and $S^+$ are nonempty, and root its heart $H$ at a source $r$. For $C\in\cdown_r(H)$, let $n_C=|C|$ and $q_C=|\Lminus(C)|$, and define
\[
 G=\sum_{C\in\cdown_r(H)}(n_C-1),\qquad U=\sum_{C\in\cdown_r(H)}(q_C-1).
\]
Then $\gdown_r(H)=G+U$. Let $H'$ be the equivalent out-arborescence. It has $h+U$ vertices, including $U$ artificial out-leaves. Every original out-leaf of $H$ remains an out-leaf: it has no child in the rooted heart and cannot be the attachment father of a downward component. If $K'=|\Lplus(H')|$, applying \eqref{eq:equivalent-leaves} to $H$ gives $K'\le k_H+G$. Let $B'$ be obtained from $A[V(H)\cup S^+]$ by replacing $H$ with $H'$; thus $|B'|=h+U+s^+$.

\begin{lemma}\label{lem:fixed-root}
With the notation above,
\begin{equation}\label{eq:fixed-root}
 \unv(A)\le2s+3h-k_H-3+\gdown_r(H)+2|\Lminus(H)|.
\end{equation}
\end{lemma}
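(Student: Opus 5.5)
The plan is to follow the three-phase median-order embedding of Dross and Havet and make every length count explicit. Take a tournament $T$ on
\[
N=2s+3h-k_H-3+\gdown_r(H)+2|\Lminus(H)|
\]
vertices and a median order $\sigma$ of $T$. Cut $\sigma$ into three consecutive intervals: an initial block $\sigma_1$ for the in-leaf cluster $S^-$, a middle block $\sigma_2$ for the heart, and a final block $\sigma_3$ for the out-cluster. The main bookkeeping is choosing these lengths so that they sum to $N$.

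\textbf{Heart (phase two).} Embed the equivalent out-arborescence $H'$ greedily, as in \cref{lem:greedy-arborescence}, in an interval of length $h+U+K'-1$. The downward components $C\in\cdown_r(H)$ are handled through the reserved blocks of size $n_C+q_C-1$, exactly as in the equivalent-arborescence construction. Since $K'\le k_H+G$, this interval has length at most $h+G+U+k_H-1=h+k_H-1+\gdown_r(H)$. The same construction gives $\unv(H)\le h+k_H-1+\gdown_r(H)$, so this matches the known bound. By the lemma, exactly $K'-1$ positions in the heart interval fail, and the injection pairs them with out-leaves of $H'$ whose images precede them.

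\textbf{Out-cluster (phase three).} Each component of $A[S^+]$ is an out-arborescence hanging from a heart vertex. I will embed the whole of $B'$ restricted to $H'\cup S^+$ forward. I intend to use \cref{lem:forward-greedy} on the positions from the heart root onward, with the $2(\cdot)-2$ root-forward invariant. This costs two positions per vertex of $S^+$, since each new child extends the root-forward interval by two positions.

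\textbf{In-cluster (phase one).} Each in-leaf $x$ of $H$ has an out-neighbour in $S^+$ (not in $S^-$). Each component of $A[S^-]$ is an in-arborescence pointing into a heart vertex. I embed these components by the directional dual of \cref{lem:forward-greedy}, reading $\sigma_1$ backwards from the image of the attachment vertex. Here \eqref{eq:feedback-backward} supplies in-neighbours on the left, and the dual root-forward invariant guarantees free positions. This costs $2s^-$ positions. The extra $2|\Lminus(H)|$ term pays for the in-leaves of $H$: they are sources of $H$ whose images may need slack on both sides, giving two spare positions each.

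The total length is then
\[
\bigl(h+k_H-1+\gdown_r(H)\bigr)+2s^-+2s^+ + \bigl(2h-2k_H-2\bigr)+2|\Lminus(H)|.
\]
Here the term $2h-2k_H-2$ is the extra room that makes the heart image root-forward, so that the cluster phases can start. This is precisely $N$.

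The step I expect to be the main obstacle is gluing phase two to phases one and three. Specifically, I must show that after the greedy heart embedding, the image of $B'$ is root-forward in a suitable interval of length $2|B'|-2$, so that \cref{lem:forward-greedy} can resume. I would first try to prove this by using the failed-position injection of \cref{lem:greedy-arborescence}. Each failed position sits after a distinct out-leaf image, with one exception. Hence any proper final interval contains at most one more failed position than leaf images. I would then combine this with \eqref{eq:feedback-forward} to bound the number of used positions by $|J|/2+1$ in every final interval $J$. If this direct count falls short by a constant, I would absorb the shortfall using the $-3$ in $N$ and the slack from $K'\le k_H+G$ being possibly strict.
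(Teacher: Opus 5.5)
The gap is in the in-cluster step, which is where the content of this lemma lies. Your length count reproduces $N$, but you give no justification for embedding each component of $A[S^-]$ by the directional dual of \cref{lem:forward-greedy}, reading backwards from the image $v_j$ of its attachment vertex $c$ at a cost of $2s^-$. The vertex $c$ lies in the heart, so $v_j$ sits inside the heart block. The positions immediately to its left are already densely occupied by images of $H'$ (the greedy hits $h+U$ of its $h+U+K'-1$ positions) and possibly by images of $S^+$. That occupied set was not produced by a backward greedy and satisfies no dual root-forward invariant, so the dual lemma has no hypothesis to start from.

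A concrete instance: let $H$ be the directed path $r=a_1\to\cdots\to a_h$, with $S^+=\{z\}$, $r\to z$, and $S^-=\{y\}$, $y\to a_h$. The greedy places the path on $h$ consecutive positions. So $y$ must go to an in-neighbour of $\phi(a_h)$ lying to the left of the entire heart, and \eqref{eq:feedback-backward} guarantees one only if the block to the left of the heart has length at least $h$. Your $\sigma_1$ has length at most $2s^-+2|\Lminus(H)|=4<h$ for $h\ge5$. Meanwhile the $2h-2k_H-2$ you spend to make the heart root-forward sits on the wrong side. The paper takes the left buffer $\lambda=h-k_H-1+(G-U)+2|\Lminus(H)|+2s^-$ (here $h+1$) and exactly $2|B'|-2$ positions from the heart root onward. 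Your gluing worry then needs no density estimate: every Phase~1 choice is also the first unused out-neighbour in $(v_{\lambda+1},\ldots,v_m)$, so Phases~1 and~2 together form a single run of \cref{lem:forward-greedy} on $B'$.

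What replaces your in-cluster step is a global counting argument. The in-cluster is embedded \emph{last}, after $S^+$ and after releasing the $U$ artificial images. Attachment vertices are processed in decreasing order of image, always using the last unhit in-neighbour to the left. Suppose this fails in the branch of $c$. Then the chain of last-in-neighbour choices and \eqref{eq:feedback-backward} give at least $(j-1)/2$ hit positions before $v_j$. In the upper bounds below, $O$ and $D$ denote original and artificial out-leaves of $H'$, split by which side of $v_j$ their image lies on.
\begin{itemize}
\item Phase~1--2 hits before $v_j$ number at most $j-\lambda-1-|D_{<j}|$.
\item By the failed-mate injection of \cref{lem:greedy-arborescence}, applied to the original and artificial out-leaves imaged at or after $v_j$, they also number at most $h+k_H+G-2|O_{\ge j}|-|D_{\ge j}|$.
\item Phase~3 hits number at most $s^--|O_{<j}|-1$, because every out-leaf of $H$ imaged before $v_j$ is an attachment vertex whose branch is still untouched.
\end{itemize}
Adding these gives $\lambda\le\lambda-1$. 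So the $2|\Lminus(H)|$ term is not slack for in-leaves on both sides. It is what remains of $h+k_H$ after each out-leaf of $H$ saves two positions, since $k_H-2|\Lplus(H)|=2|\Lminus(H)|-k_H$. Your plan never releases the artificial images and uses the failed mates only for gluing. Yet that is precisely where the saving of $U$ over the Dross--Havet bound comes from.
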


\begin{proof}
Set $\ell^-=|\Lminus(H)|$ and $\ell^+=|\Lplus(H)|$, so $k_H=\ell^-+\ell^+$, and define
\begin{equation}\label{eq:buffers}
 \lambda=h-k_H-1+G-U+2\ell^-+2s^-,\qquad
 m=2s+3h-k_H-3+G+U+2\ell^-.
\end{equation}
Here $G-U=\sum_C(n_C-q_C)\ge0$. If $h\ge3$, then $k_H\le h-1$, while if $h=2$, then $k_H=2$ and $\ell^-=1$; since $s^-\ge1$, in either case $\lambda\ge1$. Take a tournament $T$ on $m$ vertices with local median order $\sigma=(v_1,\ldots,v_m)$.

In Phase~1, run the greedy procedure of \cref{lem:greedy-arborescence} on $H'$ in the exact interval $(v_{\lambda+1},\ldots,v_p)$, where $p=\lambda+h+U+K'-1$. Whenever a sibling image set is exposed, recover the corresponding downward component and fix the child identities before any of those positions is scanned. The online clause of \cref{lem:greedy-arborescence} shows that the completed execution is a greedy execution for the final labelled $H'$, so its failed-position injection applies to the actual original and artificial out-leaves. Since the nontrivial out-arborescence $H'$ has $K'\le h+U-1$ and
\[
 m-\lambda=2(h+U+s^+)-2=2|B'|-2,
\]
we have $p\le m$.

In Phase~2, whenever a vertex of the current embedding has an unembedded out-neighbour in $S^+$, choose one whose image has the smallest index and send an arbitrary such child to the first currently unhit out-neighbour of its image to the right. View the child positions exposed simultaneously in Phase~1 in increasing order. Every choice in Phases~1 and~2 is then the first available out-neighbour in the full interval $(v_{\lambda+1},\ldots,v_m)$: during Phase~1, \cref{lem:greedy-arborescence} finds every required position before $v_p$, and every position after $v_p$ has a larger index. The interval $(v_{\lambda+1},\ldots,v_m)$ is itself a local median order by interval heredity. Since it has length $m-\lambda=2|B'|-2$, and since \cref{lem:forward-greedy} allows the next child to be chosen arbitrarily, that lemma shows that all of $B'$ is embedded. We then release the $U$ artificial images. Keeping Phase~1 separate is essential: its shorter endpoint $p$, rather than the endpoint used only to guarantee completion of $B'$, is what yields the later estimate \eqref{eq:hit2-first}.

 For each vertex $c\in V(H)$ having an in-neighbour in $S^-$, let $F_c$ be the union of $c$ and all components of $A[S^-]$ whose root sends its unique external arc to $c$. Every component of $A[S^-]$ has such an attachment in $H$: it has exactly one outgoing external arc, and if this arc entered $S^+$, then the corresponding components of $S^-$ and $S^+$ would form a component of $A-H$ disconnected from $H$. Thus the sets $V(F_c)\setminus\{c\}$ partition $S^-$, and each $F_c$ is an in-arborescence rooted at $c$. 


In Phase~3, order these attachment vertices by decreasing positions of their images. Process them in this order, completing $F_c-c$ before moving to the next attachment vertex. Within the current branch $F_c$, choose any embedded vertex having an unembedded in-neighbour in $F_c$, and send an arbitrary such in-neighbour to the last currently unhit in-neighbour of its image to the left. No position is released after Phase~3 begins.

Suppose that this phase fails while processing the branch rooted at $c$, and write $v_j=\phi(c)$. Then $\lambda+1\le j\le p$. Let $b$ be the vertex at which the failure occurs, with image $v_i$, and let
\[
 b=x_0,x_1,\ldots,x_t=c
\]
be the directed path from $b$ to $c$ in $F_c$; thus $x_r\to x_{r+1}$. Let $v_{i_r}=\phi(x_r)$, so $i=i_0<i_1<\cdots<i_t=j$. Let $\operatorname{hit}$ be the number of hit positions among $v_1,\ldots,v_{j-1}$ at the moment of failure. Since no unhit in-neighbour of $v_i$ remains to its left, \eqref{eq:feedback-backward} gives at least $(i-1)/2$ hit positions in $\{v_1,\ldots,v_{i-1}\}$. For every $0\le r<t$, the image $v_{i_r}$ was chosen as the last unhit in-neighbour of $v_{i_{r+1}}$. Hence every in-neighbour of $v_{i_{r+1}}$ in $\{v_{i_r+1},\ldots,v_{i_{r+1}-1}\}$ was already hit at that time and remains hit. The position $v_{i_r}$ is itself a hit in-neighbour of $v_{i_{r+1}}$, since $x_r\to x_{r+1}$. Applying \eqref{eq:feedback-backward} to $(v_{i_r},\ldots,v_{i_{r+1}})$, we therefore find at least $(i_{r+1}-i_r)/2$ hit positions in $\{v_{i_r},\ldots,v_{i_{r+1}-1}\}$. These intervals are disjoint, and therefore
\begin{equation}\label{eq:chain-hit-lower}
 \operatorname{hit}\ge\frac{j-1}{2}.
\end{equation}

Let $O_{<j}$ and $O_{\ge j}$ be the original out-leaves of $H$ whose images lie in $[\lambda+1,j-1]$ and $[j,p]$, respectively, and define $D_{<j},D_{\ge j}$ analogously for the artificial leaves. All these images were chosen in Phase~1, so
\begin{equation}\label{eq:partitions}
 |O_{<j}|+|O_{\ge j}|=\ell^+,
 \qquad |D_{<j}|+|D_{\ge j}|=U.
\end{equation}
Write $\operatorname{hit}=\operatorname{hit}_2+\operatorname{hit}_3$, according as the current occupant was embedded in Phases~1--2 or Phase~3. Every out-leaf of $H$ is an attachment vertex, and its branch outside $H$ is nonempty. Since the attachment vertices are processed in decreasing image order, each vertex in $O_{<j}$ has an entirely unprocessed branch containing an unembedded vertex of $S^-$. The current branch also contains an unembedded vertex. These witnesses are distinct because the sets $V(F_c)\setminus\{c\}$ partition $S^-$. Consequently,
\begin{equation}\label{eq:hit3}
 \operatorname{hit}_3\le s^--|O_{<j}|-1.
\end{equation}

We use two bounds on $\operatorname{hit}_2$. At the considered moment, the $U$ released artificial images are not Phase~1--2 hits. By \cref{lem:greedy-arborescence}, among the leaves in $O_{\ge j}\cup D_{\ge j}$, all but at most one have distinct failed mates after their images. The original-leaf images, artificial images, and paired failed positions are pairwise distinct. Thus, among the $p-\lambda$ central positions, the $U$ artificial images cannot contribute to $\operatorname{hit}_2$. In addition, the leaves in $O_{\ge j}$ contribute their $|O_{\ge j}|$ images, and all but at most one of the $|O_{\ge j}|+|D_{\ge j}|$ leaves contribute distinct failed mates; hence at least $2|O_{\ge j}|+|D_{\ge j}|-1$ further positions lie at or after $j$. It follows that
\begin{align}
 \operatorname{hit}_2
 &\le p-\lambda-U-2|O_{\ge j}|-|D_{\ge j}|+1\notag\\
 &\le h+k_H+G-2|O_{\ge j}|-|D_{\ge j}|,\label{eq:hit2-first}
\end{align}
where $p-\lambda=h+U+K'-1$ and $K'\le k_H+G$. The artificial images in $D_{<j}$ are also not Phase~1--2 hits, giving
\begin{equation}\label{eq:hit2-second}
 \operatorname{hit}_2\le j-\lambda-1-|D_{<j}|.
\end{equation}

Combining \eqref{eq:chain-hit-lower} with the two preceding bounds and \eqref{eq:hit3}, and then using \eqref{eq:partitions}, we obtain
\begin{align*}
 j-1
 &\le 2\operatorname{hit}\\
 &\le h+k_H+G-2|O_{\ge j}|-|D_{\ge j}|+j-\lambda-1-|D_{<j}|\\
 &\hspace{2em}+2s^--2|O_{<j}|-2,
\end{align*}
so
\begin{align*}
 \lambda
 &\le h+k_H+G-U-2|O_{<j}|-2|O_{\ge j}|+2s^--2\\
 &=h+k_H+G-U-2\ell^++2s^--2\\
 &=h-k_H-2+(G-U)+2\ell^-+2s^-\\
 &=\lambda-1,
\end{align*}
which is impossible. Phase~3 therefore succeeds, and \eqref{eq:buffers} gives \eqref{eq:fixed-root}.
\end{proof}

\begin{remark}\label{rem:one-per-artificial}
If the failed mates of the artificial leaves are not used, the left buffer in \eqref{eq:buffers} must be enlarged from $\lambda$ to $\lambda+U$ while the Phase~2 capacity is kept fixed. Thus the refinement recovers exactly one tournament position for each artificial leaf.
\end{remark}

Rewriting \cref{lem:fixed-root} separates the two directions.

\begin{corollary}\label{cor:directional}
If $r$ is a source of $H$, then $\unv(A)\le2s+3h-3+Q_r^{\downarrow}$, where $Q_r^{\downarrow}=\gdown_r(H)+|\Lminus(H)|-|\Lplus(H)|$. If $r$ is a sink, then $\unv(A)\le2s+3h-3+Q_r^{\uparrow}$, where $Q_r^{\uparrow}=\gup_r(H)+|\Lplus(H)|-|\Lminus(H)|$.
\end{corollary}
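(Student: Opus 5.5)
The source case is a rewriting of \cref{lem:fixed-root}. Write $k_H=|\Lminus(H)|+|\Lplus(H)|$. Then
\[
-k_H+2|\Lminus(H)|=|\Lminus(H)|-|\Lplus(H)|,
\]
so the right-hand side of \eqref{eq:fixed-root} equals
\[
2s+3h-3+\gdown_r(H)+|\Lminus(H)|-|\Lplus(H)|=2s+3h-3+Q_r^{\downarrow}.
\]
\cref{lem:fixed-root} was proved under the standing assumption that both clusters $S^-$ and $S^+$ are nonempty, and I would keep that assumption here, since it is where the corollary is stated. If either cluster were empty, one would instead appeal to \cref{lem:one-sided-extension} or \cref{lem:bi-arborescence}, but I do not expect this to be needed.

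For the sink case I would use directional duality. Let $A^{*}$ be the tree obtained by reversing every arc of $A$. Every tournament $T$ contains $A$ if and only if its reverse $T^{*}$ contains $A^{*}$, so $\unv(A^{*})=\unv(A)$. Reversal has the following effects:
\begin{itemize}
\item it swaps in-leaves and out-leaves, so $\Lminus$ and $\Lplus$ exchange;
\item it swaps the recursive definitions of $S^+$ and $S^-$, so $S^{\pm}(A^{*})=S^{\mp}(A)$;
\item consequently $s$ is unchanged, the heart is the same vertex set with $H(A^{*})=H^{*}$, and $h$ is unchanged;
\item $A^{*}$ is again a non-bi-arborescence with both clusters nonempty, since being a bi-arborescence is a self-dual property.
\end{itemize}
A sink $r$ of $H$ becomes a source of $H^{*}$. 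For the root $r$, an arc of $H$ pointing away from $r$ becomes one pointing towards $r$ in $H^{*}$, so $\cdown_r(H^{*})$ consists of the reversals of the components of $\cupward_r(H)$. An in-leaf of a reversed component $C^{*}$ is an out-leaf of $C$, so $\gdown_r(H^{*})=\gup_r(H)$. Also $|\Lminus(H^{*})|=|\Lplus(H)|$ and $|\Lplus(H^{*})|=|\Lminus(H)|$.

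Applying the source case to $A^{*}$ with root $r$ then gives
\[
\unv(A)=\unv(A^{*})\le 2s+3h-3+\gup_r(H)+|\Lplus(H)|-|\Lminus(H)|=2s+3h-3+Q_r^{\uparrow}.
\]

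The only delicate point is checking that the cluster and heart decomposition commutes with reversal, including the component bookkeeping in the definition of $\gup$ and $\gdown$. This follows directly from the symmetric recursive definitions of $S^+$ and $S^-$, so I expect no real obstacle.
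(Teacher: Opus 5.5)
Your proposal is correct and follows the paper's proof. The source case is the same algebraic rewriting of \eqref{eq:fixed-root} using $k_H=|\Lminus(H)|+|\Lplus(H)|$, and the sink case is the directional dual, where you check explicitly what the paper leaves implicit: reversal swaps the two clusters, the in- and out-leaves, and $\gup_r$ with $\gdown_r$.
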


\begin{proof}
The first estimate follows from \eqref{eq:fixed-root} and $k_H=|\Lminus(H)|+|\Lplus(H)|$; the second is its directional dual.
\end{proof}

\section{Global bounds}\label{sec:global}

\begin{lemma}\label{lem:heart-exact}
If both leaf clusters of $A$ are nonempty, then
\begin{equation}\label{eq:heart-bound-exact}
 \unv(A)\le2s+3h-3+\left\lfloor\frac{h+k_H-2}{2}\right\rfloor.
\end{equation}
\end{lemma}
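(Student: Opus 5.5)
We will derive \eqref{eq:heart-bound-exact} from \cref{cor:directional} by averaging the two directional estimates, using \cref{lem:gamma-facts} applied to the heart $H$. Since $H$ is an oriented tree on $h\ge2$ vertices with $k_H$ leaves, \cref{lem:gamma-facts} gives $\gup_{r}(H)+\gdown_{r}(H)\le h+k_H-2$ for every root $r$. The same lemma lets us choose a source $r_1$ of $H$ minimising $\gdown_{r}(H)$ and a sink $r_2$ of $H$ minimising $\gup_{r}(H)$. Then
\[
 \gdown_{r_1}(H)+\gup_{r_2}(H)\le \gdown_{r}(H)+\gup_{r}(H)\le h+k_H-2
\]
for any fixed root $r$; for instance, take $r=r_1$.

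Next we add the two quantities from \cref{cor:directional}. The leaf-count corrections $|\Lminus(H)|-|\Lplus(H)|$ and $|\Lplus(H)|-|\Lminus(H)|$ cancel, so
\[
 Q_{r_1}^{\downarrow}+Q_{r_2}^{\uparrow}=\gdown_{r_1}(H)+\gup_{r_2}(H)\le h+k_H-2 .
\]
Both quantities are integers, so their minimum is at most $\lfloor (h+k_H-2)/2\rfloor$. Applying the corresponding half of \cref{cor:directional} with that root gives $\unv(A)\le 2s+3h-3+\lfloor (h+k_H-2)/2\rfloor$.

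The main point to check is that the hypotheses of \cref{cor:directional} hold. That corollary requires both clusters to be nonempty and $A$ to be a non-bi-arborescence. Nonemptiness is assumed here. The clusters $S^\pm$ are only defined for non-bi-arborescences, so the non-bi-arborescence condition is implicit in the notation. The chosen roots must also genuinely be a source or a sink of $H$, which \cref{lem:gamma-facts} provides. Finally, we must check that \cref{lem:gamma-facts} applies to $H$ itself, which needs $h\ge2$; the preliminaries established this. If instead it needs $H$ to have at least three vertices, the case $h=2$ can be handled directly. There $H$ is a single arc, so with $r$ the source we get $\gdown_r(H)=0$, $Q^{\downarrow}=1-1=0$, and $\lfloor(2+2-2)/2\rfloor=1$, so the bound holds trivially.
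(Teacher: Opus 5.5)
Your proposal is correct and follows the paper's own proof: you pick a source minimising $\gdown_r(H)$ and a sink minimising $\gup_r(H)$ via \cref{lem:gamma-facts}, observe that the leaf corrections cancel so that $Q^{\downarrow}+Q^{\uparrow}\le h+k_H-2$, and apply \cref{cor:directional} to whichever integer is at most $\lfloor(h+k_H-2)/2\rfloor$. Your additional hypothesis checks, including the direct verification for $h=2$, are accurate but not needed, since the paper applies \cref{lem:gamma-facts} to the heart for $h\ge2$ without further comment.
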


\begin{proof}
Choose $r_-$ minimising $\gdown_r(H)$ and $r_+$ minimising $\gup_r(H)$. By \cref{lem:gamma-facts}, they may be chosen as a source and a sink, respectively. For any fixed root $r_0$,
\[
 \gdown_{r_-}(H)+\gup_{r_+}(H)
 \le\gdown_{r_0}(H)+\gup_{r_0}(H)\le h+k_H-2.
\]
The leaf-orientation corrections cancel, so $Q_{r_-}^{\downarrow}+Q_{r_+}^{\uparrow}\le h+k_H-2$. One of these integers is at most $\lfloor(h+k_H-2)/2\rfloor$, and \cref{cor:directional} applies.
\end{proof}

\begin{corollary}\label{cor:heart-bound}
If both leaf clusters are nonempty and $h\ge3$, then
\begin{equation}\label{eq:many-heart}
 \unv(A)\le2s+4h-5=2n+2h-5.
\end{equation}
If $h=2$, then $\unv(A)\le2n$.
\end{corollary}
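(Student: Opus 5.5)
This follows from \cref{lem:heart-exact} by a direct estimate of $\lfloor(h+k_H-2)/2\rfloor$ in terms of $h$, using $n=s+h$ to rewrite the answer.

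\textbf{The case $h\ge3$.} Here the key input is the leaf bound for the heart, $k_H\le h-1$. This was already used in the proof of \cref{lem:fixed-root}. It holds because a tree on $h\ge3$ vertices always has at least one non-leaf vertex. The only case needing a moment's thought is $h=3$: the path on three vertices has exactly two leaves, so $k_H\le 2=h-1$. Substituting $k_H\le h-1$ gives
\[
 \left\lfloor\frac{h+k_H-2}{2}\right\rfloor\le\left\lfloor\frac{2h-3}{2}\right\rfloor=h-2,
\]
since $2h-3$ is odd. Inserting this into \eqref{eq:heart-bound-exact} yields
\[
 \unv(A)\le 2s+3h-3+h-2=2s+4h-5.
\]
Finally, $s=n-h$ turns this into $2n+2h-5$. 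Note that the rounding gain from the odd numerator $2h-3$ is exactly what produces the constant $-5$; without it the bound would be one larger.

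\textbf{The case $h=2$.} Now the heart is a single arc, so $k_H=2$. Then
\[
 \left\lfloor\frac{h+k_H-2}{2}\right\rfloor=\left\lfloor\frac{2}{2}\right\rfloor=1.
\]
Thus \eqref{eq:heart-bound-exact} gives $\unv(A)\le 2s+6-3+1=2s+4$. Substituting $s=n-2$ gives $2s+4=2n$.

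There is no real obstacle. The only points needing care are the leaf count $k_H\le h-1$ for $h\ge3$ (including the boundary case $h=3$) and not losing the floor.
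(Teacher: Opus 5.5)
Your proof is correct and is the paper's argument: substitute $k_H\le h-1$ (for $h\ge3$), or $k_H=2$ (for $h=2$), into \eqref{eq:heart-bound-exact} and rewrite using $s=n-h$. Your closing aside is slightly off, though: since $\unv(A)$ is an integer, even the unfloored estimate $2s+4h-\tfrac92$ would already give $2s+4h-5$, so the floor is convenient but not what produces the constant.
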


\begin{proof}
For $h\ge3$, the heart has at most $h-1$ leaves, and \eqref{eq:heart-bound-exact} gives $2s+4h-5$. If $h=2$, then $k_H=2$, and the same formula gives $2s+4=2n$.
\end{proof}

\begin{lemma}\label{lem:one-cluster}
If $A$ is not a bi-arborescence and exactly one of $S^-,S^+$ is nonempty, then $\unv(A)\le2n+h-3$.
\end{lemma}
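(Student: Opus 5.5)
The plan is to reduce the statement directly to \cref{lem:one-sided-extension}, taking $H$ to be the heart and building $A$ from it by adding the single nonempty cluster one forward (or backward) leaf at a time. By directional duality it suffices to treat the case $S^-=\varnothing\neq S^+$; reversing every arc exchanges the roles of $S^-$ and $S^+$ and turns forward leaf additions into backward ones, which is covered by the last sentence of \cref{lem:one-sided-extension}.

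\emph{Step 1: the heart is large enough.} Since $A$ is not a bi-arborescence, the heart has at least two vertices, as shown in the discussion of the leaf-cluster decomposition. Thus $h\ge2$, and \cref{lem:one-sided-extension} applies with $H$ the heart.

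\emph{Step 2: how $S^+$ is attached.} We use the structural facts recorded with the decomposition. $A[S^+]$ is a forest of out-arborescences. Each component sends no arc out of $S^+$, and it has at most one entering arc, which is incident with its root. Because $S^-=\varnothing$ and $A$ is connected with $H\ne\varnothing$, every component must receive exactly one arc. That arc comes from $H$: an arc from another $S^+$ component is impossible, since such components send no arcs outside themselves.

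\emph{Step 3: the ordering of additions.} Order $S^+$ by a breadth-first traversal of each component, starting from its root. Adding the vertices in this order, each new vertex $x$ is joined to the current tree by a single arc from its unique in-neighbour to $x$. For a component root this in-neighbour lies in $H$; for any other vertex it is the parent of $x$ in the out-arborescence, which was added earlier. So $A$ arises from $H$ by successively adding $t=s$ forward leaves.

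\emph{Step 4: the bound.} \cref{lem:one-sided-extension} then gives
\[
\unv(A)\le 3h-3+2s=2(h+s)+h-3=2n+h-3 .
\]

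The only point needing care is Step~2, namely checking that every vertex of $S^+$ really enters as a forward leaf, with its unique attaching arc directed from the old tree to the new vertex. This follows from the definition of $S^+$: each of its vertices has exactly one in-neighbour, and all its out-neighbours lie in $S^+$. With that verified, the rest is a direct application of \cref{lem:one-sided-extension}.
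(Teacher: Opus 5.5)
Your proposal is correct and takes essentially the same route as the paper. It reduces by duality to $S^-=\varnothing$, adds the vertices of $S^+$ one at a time as forward leaves (your breadth-first order is the paper's increasing-distance-from-$H$ order), and applies \cref{lem:one-sided-extension} with $t=s^+=n-h$; your checks that $h\ge2$ and that each cluster component attaches to $H$ by a single arc into its root just make explicit what the paper leaves implicit.
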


\begin{proof}
By directional duality assume $S^-=\varnothing$. Starting with $H$, add the vertices of $S^+$ in increasing distance from $H$; each is then a new out-leaf. Apply \cref{lem:one-sided-extension} with $t=s^+=n-h$ to obtain $\unv(A)\le3h-3+2s^+=2n+h-3$.
\end{proof}

The next consequence gives a direct uniform improvement of the many-leaf estimate of Dross and Havet.

\begin{corollary}\label{cor:nk-bound}
Every non-bi-arborescence $A$ on $n$ vertices with $k$ leaves satisfies
\begin{equation}\label{eq:nk-bound}
 \unv(A)\le4n-2k-4.
\end{equation}
Consequently,
\[
 4n-2k-4\le
 \left\lceil\frac92n-\frac52k-\frac92\right\rceil-1.
\]
\end{corollary}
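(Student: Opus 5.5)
We split according to which leaf clusters are nonempty, and in each case reduce to the inequality $k\le n-h$ together with a bound expressed through $h$.

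\emph{Both clusters nonempty.} Since $A$ is not a bi-arborescence, the heart satisfies $h\ge2$.

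If $h\ge3$, \cref{cor:heart-bound} gives $\unv(A)\le2n+2h-5$. Every leaf of $A$ lies in $S^-\cup S^+$, so $k\le s=n-h$, that is, $h\le n-k$. Hence
\[
2n+2h-5\le 2n+2(n-k)-5=4n-2k-5,
\]
which is even better than required.

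If $h=2$, \cref{cor:heart-bound} gives $\unv(A)\le2n$, and again $k\le n-2$. Thus $4n-2k-4\ge4n-2(n-2)-4=2n$, as needed. This case is exactly tight when $k=n-2$, which is why the statement carries $-4$ rather than $-5$.

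\emph{Exactly one cluster nonempty.} \cref{lem:one-cluster} gives $\unv(A)\le2n+h-3$, and again $h\le n-k$. We need
\[
2n+h-3\le4n-2k-4,\quad\text{i.e.}\quad h+2k\le2n-1.
\]
Since $h\le n-k$, we have $h+2k\le n+k$, so it suffices that $k\le n-1$. For a tree on $n\ge3$ vertices this holds with room to spare: $k\le n-1$, with equality only for a star. A star is an arborescence or a bi-arborescence in any orientation, so it is excluded, and in fact $k\le n-2$ for non-bi-arborescences.

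The only care needed here is that \cref{lem:one-cluster} uses \cref{lem:one-sided-extension}, which requires $h\ge2$. This was established in the preliminaries for every non-bi-arborescence, by the argument that $H=\varnothing$ or $|H|=1$ forces a bi-arborescence.

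The case that both clusters are empty cannot occur, since every tree with $n\ge2$ has leaves.

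\emph{Comparison with Dross--Havet.} We show
\[
4n-2k-4\le\left\lceil\tfrac92n-\tfrac52k-\tfrac92\right\rceil-1.
\]
It suffices to prove $4n-2k-3\le\frac92n-\frac52k-\frac92$, since the left side is then an integer not exceeding the right, hence not exceeding its ceiling. This is equivalent to $\frac12k\le\frac12n-\frac32$, i.e.\ $k\le n-3$.

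This leaves the boundary case $k=n-2$. There the right-hand side equals $\lceil2n+\tfrac12\rceil-1=2n$, while the left-hand side equals $4n-2(n-2)-4=2n$, so the inequality holds with equality. Combining the two ranges, the inequality holds for all $k\le n-2$.

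The main subtlety is verifying $k\le n-2$ for every non-bi-arborescence. I expect it follows because a tree with $k=n-1$ is a star, which is always a bi-arborescence, and otherwise $k\le n-2$ trivially.
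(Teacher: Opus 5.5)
Your proof is correct and is essentially the paper's own argument: the same case split (both clusters nonempty with $h\ge3$ or $h=2$ via \cref{cor:heart-bound}, exactly one cluster via \cref{lem:one-cluster}), each case closed using $k\le n-h$, together with the observation that a non-bi-arborescence is not a star, so $k\le n-2$. Your treatment of the ceiling, splitting into $k\le n-3$ and the equality case $k=n-2$, is an unpacked form of the paper's remark that the two affine expressions differ by $(n-k-1)/2\ge 1/2$.
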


\begin{proof}
Recall that $k\le n-h$. If both clusters are nonempty and $h\ge3$, \cref{cor:heart-bound} gives $\unv(A)\le2n+2h-5\le4n-2k-5$. If both are nonempty and $h=2$, then $k\le n-2$, so $2n\le4n-2k-4$. If exactly one cluster is nonempty, \cref{lem:one-cluster} gives $\unv(A)\le2n+h-3\le3n-k-3\le4n-2k-4$. Finally, a non-bi-arborescence is not a star, so $k\le n-2$, and the difference between the Dross--Havet expression and the right-hand side of \eqref{eq:nk-bound} is $(n-k-1)/2\ge1/2$, proving the last assertion after taking the ceiling.
\end{proof}

\begin{proof}[Proof of \cref{thm:main}]
Let $A$ be an oriented tree on $n\ge2$ vertices. If it is a bi-arborescence, \cref{lem:bi-arborescence} applies, and $2n-2\le\lceil(18n-23)/7\rceil$.

Assume otherwise and retain the leaf-cluster notation. Since $k\le s=n-h$, the few-leaf theorem of Dross and Havet gives
\begin{equation}\label{eq:few-h}
 \unv(A)\le\left\lceil\frac32(n+s)-2\right\rceil
 =\left\lceil3n-\frac32h-2\right\rceil.
\end{equation}
If exactly one cluster is nonempty, \cref{lem:one-cluster} and \eqref{eq:few-h} give
\[
 \unv(A)\le\min\left\{2n+h-3,
 \left\lceil3n-\frac32h-2\right\rceil\right\}
 \le\left\lceil\frac{12n-13}{5}\right\rceil
 \le\left\lceil\frac{18n-23}{7}\right\rceil.
\]
Indeed, the two real affine bounds balance at $h=2(n+1)/5$, where their common value is $(12n-13)/5$; the last inequality holds for $n\ge4$. Every oriented tree on $n\le3$ vertices is a bi-arborescence.

Suppose both clusters are nonempty. If $h\ge3$, combine \eqref{eq:few-h} and \eqref{eq:many-heart}. For $h\le2(n+3)/7$, the latter is at most $(18n-23)/7$; for $h>2(n+3)/7$, the real quantity inside the ceiling in \eqref{eq:few-h} is smaller than $(18n-23)/7$. Hence the desired integer bound follows.

It remains that $h=2$. For $n=4$, \eqref{eq:few-h} gives $\unv(A)\le7$. For $n=5$, \cref{cor:heart-bound} gives $\unv(A)\le10=\lceil67/7\rceil$. For $n\ge6$, the same corollary gives $\unv(A)\le2n\le(18n-23)/7$. This completes the proof.
\end{proof}

\section{Sharing artificial children}\label{sec:sharing}

The equivalent-arborescence construction reserves $q_C-1$ artificial children separately for every downward component $C$. Components with a common father can share a pool. This refinement is not used in \cref{thm:main}, but records a further structural saving.

Fix a rooted heart $(H,r)$ with $r$ a source, and write $z_+=\max\{z,0\}$. For a vertex $x$, let $C_1,\ldots,C_t$ be the downward components whose roots have father $x$, and let $n_i=|C_i|$ and $q_i=|\Lminus(C_i)|$. Define
\begin{equation}\label{eq:dx}
 d_x=\min_{\pi\in S_t}\max_{1\le j\le t}
 \left(q_{\pi(j)}-1-\sum_{i>j}n_{\pi(i)}\right)_+,
\end{equation}
and set $d_x=0$ if there is no such component. Let $D_r=\sum_x d_x$ and $P_r=\sum_{C\in\cdown_r(H)}(|\Lminus(C)|-1)-D_r$. Since $d_x\le\sum_{i=1}^t(q_i-1)$ for every $x$, we have $P_r\ge0$.

\begin{proposition}\label{prop:sharing}
In the setting of \cref{lem:fixed-root},
\begin{equation}\label{eq:sharing-bound}
 \unv(A)\le2s+3h-k_H-3+\gdown_r(H)-2P_r+2|\Lminus(H)|.
\end{equation}
\end{proposition}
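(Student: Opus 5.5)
The plan is to rerun the proof of \cref{lem:fixed-root} with a smaller equivalent out-arborescence $H''$. In $H''$ every father $x$ receives only $d_x$ artificial children in total, instead of $\sum_i(q_i-1)$. Thus $H''$ has $h+D_r$ vertices, of which exactly $D_r=U-P_r$ are artificial out-leaves.

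\textbf{Bookkeeping.} I would redo the leaf count \eqref{eq:equivalent-leaves} for $H''$. The original contributions $(n_C-q_C)+|C\cap\Lminus(H)|$ are unchanged, and the artificial contribution drops from $U$ to $D_r$. Hence $K''=|\Lplus(H'')|\le k_H+G-P_r$. Now define
\[
\lambda=h-k_H-1+G-U+2\ell^-+2s^-
\]
exactly as in \eqref{eq:buffers}; since $G-U=(G-P_r)-D_r$, this is the same value. The total length becomes $m''=\lambda+2(h+D_r+s^+)-2$. A direct computation gives
\[
m''=2s+3h-k_H-3+G+U+2\ell^--2P_r,
\]
which is \eqref{eq:sharing-bound}.

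\textbf{Rerunning the three phases.} Phase~1 uses the interval of length $h+D_r+K''-1$, and Phase~2 is governed by \cref{lem:forward-greedy} on $B''$, with $m''-\lambda=2|B''|-2$. In the Phase~3 contradiction, every occurrence of $U$ and $D_{\ge j},D_{<j}$ now refers to the $D_r$ artificial leaves. The bound \eqref{eq:hit2-first} becomes $\operatorname{hit}_2\le h+k_H+G-P_r-D_r+1-2|O_{\ge j}|-|D_{\ge j}|$. Because $G-P_r-D_r=G-U$, the final chain of inequalities yields $\lambda\le\lambda-1$ verbatim. The online clause of \cref{lem:greedy-arborescence} again lets all child identities be fixed after the sibling image set is exposed. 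The failed-mate injection therefore still applies to the $D_r$ artificial leaves.

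\textbf{Pooled block lemma (main obstacle).} What remains is to recover all components $C_1,\ldots,C_t$ of a common father $x$ inside the positions exposed for its children. Here the components share one pooled block of $\sum_i n_i+d_x$ positions instead of disjoint blocks of sizes $n_i+q_i-1$. Fix a permutation $\pi$ attaining the minimum in \eqref{eq:dx}, and take a local median order of the tournament induced on the pooled block. I would embed $C_{\pi(t)},C_{\pi(t-1)},\ldots,C_{\pi(1)}$ by the directional dual of \cref{lem:greedy-arborescence}, each on a final segment of what remains. The $q_{\pi(j)}-1$ failed positions of $C_{\pi(j)}$ are reused as room for the vertices of the components $C_{\pi(i)}$ with $i>j$. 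Only the excess $(q_{\pi(j)}-1-\sum_{i>j}n_{\pi(i)})_+$ must be genuinely spare, and the maximum over $j$ of these excesses is $d_x$.

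Making this precise is where I expect the difficulty. The failed positions of one component are interleaved with its images rather than forming a contiguous block, so the next component must be embedded in a subsequence that is no longer an interval. I would first try to replace "interval of a local median order" by "subsequence obtained by deleting hit positions", and verify \eqref{eq:feedback-backward} for the greedy in-neighbour choice directly through the counting of Claims~1 and~2. If that fails, I would nest the components in the opposite order, letting each earlier component's block sit inside the failed positions of a later one.

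Once the block lemma holds, the $d_x$ leftover positions are declared the artificial children of $x$ and released after Phase~2, exactly as the $U$ artificial images are in \cref{lem:fixed-root}. The argument then concludes as above.
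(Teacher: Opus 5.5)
\paragraph{The gap.} The pooled block step is the only new ingredient of the proposition, and you leave it unresolved. As written it is also in the wrong order. A component's failed positions can be reused only by components recovered \emph{after} it. The definition \eqref{eq:dx} charges the excess of $C_{\pi(j)}$ against $\sum_{i>j}n_{\pi(i)}$, so the recovery order must be $C_{\pi(1)},C_{\pi(2)},\ldots,C_{\pi(t)}$. In your order $C_{\pi(t)},\ldots,C_{\pi(1)}$, only $\sum_{i\le j}n_{\pi(i)}+d_x$ positions remain when $C_{\pi(j)}$ is embedded, and \eqref{eq:dx} gives no control over that quantity.

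Your proposed remedy for the non-interval issue is also unlikely to work. Deleting positions from a local median order need not leave a local median order: delete the middle vertex of a cyclic triangle listed in a median order, and the remaining pair is a backward arc. So there is no reason for \eqref{eq:feedback-backward} to survive on the subsequence left after deleting hit positions.

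\paragraph{The missing observation.} The block need not be related to $\sigma$, or to any single order of the pool.
\begin{itemize}
\item Every pool position is an out-neighbour of $\phi(x)$, so the arc from $x$ to the root of each component is automatic.
\item The children, in the flattened arborescence, of vertices of a component are placed later by the outer greedy procedure.
\item Hence only the internal arcs of a component must be realised, and these depend only on the induced subtournament.
\end{itemize}
So, as the paper does, before recovering $C_{\pi(j)}$ pick \emph{any} $n_{\pi(j)}+q_{\pi(j)}-1$ of the still unlabelled pool positions. They exist because $\sum_{i\ge j}n_{\pi(i)}+d_x\ge n_{\pi(j)}+q_{\pi(j)}-1$ by \eqref{eq:dx}. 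Take a fresh local median order of the subtournament they induce, and apply the directional dual of \cref{lem:greedy-arborescence}. This labels $n_{\pi(j)}$ positions and returns the other $q_{\pi(j)}-1$ to the pool. After all $t$ components, exactly $d_x$ positions remain; they are the artificial children. All labels are fixed before any pool position is scanned, so the online clause applies.

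\paragraph{The rest of your plan.} It matches the paper: $\lambda$ is unchanged since $G-U=(G-P_r)-D_r$, $K''\le k_H+G-P_r$, and $m''$ equals the right-hand side of \eqref{eq:sharing-bound}. There is one slip. The analogue of \eqref{eq:hit2-first} is $\operatorname{hit}_2\le h+K''-2|O_{\ge j}|-|D_{\ge j}|\le h+k_H+G-P_r-2|O_{\ge j}|-|D_{\ge j}|$, without your extra $-D_r+1$. The term $-D_r$ enters only after adding \eqref{eq:hit2-second} and using $|D_{<j}|+|D_{\ge j}|=D_r$. With your displayed bound, the chain gives only $\lambda\le\lambda-D_r$, which is no contradiction when $D_r=0$.
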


\begin{proof}
For each father $x$, flatten all downward components attached to it simultaneously and add only $d_x$ artificial children. Fix an order $C_1,\ldots,C_t$ attaining \eqref{eq:dx}. When the outer greedy procedure exposes all child images of $x$, none has yet been scanned. First assign the unchanged children of $x$ to singleton positions and regard the remaining $\sum_i n_i+d_x$ positions as the common pool for $C_1,\ldots,C_t$. Immediately before recovering $C_j$, at least $\sum_{i\ge j}n_i+d_x\ge n_j+q_j-1$ positions remain unlabelled. Select $n_j+q_j-1$ of them, take a local median order of their induced subtournament, and apply the directional dual of \cref{lem:greedy-arborescence} to recover $C_j$. This occupies $n_j$ positions. After all components are recovered, precisely $d_x$ children remain artificial.

Let $H^*$ be the resulting out-arborescence and let $E_r=\sum_{C\in\cdown_r(H)}(|C|-|\Lminus(C)|)$. Then $|H^*|=h+D_r$. The same leaf count as above, with $D_r$ artificial vertices and at most $|C|-|\Lminus(C)|$ new original out-leaves from each component besides the in-leaves already counted by $k_H$, gives $K^*\le k_H+E_r+D_r$. Use
\[
 \lambda^*=h-k_H-1+E_r+2|\Lminus(H)|+2s^-,\qquad
 m^*=2s+3h-k_H-3+E_r+2D_r+2|\Lminus(H)|.
\]
Here $\lambda^*\ge1$: if $h\ge3$, then $h-k_H-1\ge0$, while if $h=2$, then $k_H=2$, $|\Lminus(H)|=1$, and $s^-\ge1$. Run Phase~1 on the exact interval ending at $p^*=\lambda^*+h+D_r+K^*-1$. Since $K^*\le h+D_r-1$ and $m^*-\lambda^*=2(h+D_r+s^+)-2$, we have $p^*\le m^*$, and \cref{lem:forward-greedy} completes Phase~2 exactly as in the proof of \cref{lem:fixed-root}. Release the $D_r$ artificial images.

Run Phase~3 branch by branch, in decreasing order of the images of the attachment vertices, exactly as in the proof of \cref{lem:fixed-root}. If it failed in the branch rooted at an attachment vertex with image $v_j$, the ancestor-chain argument would give $\operatorname{hit}\ge(j-1)/2$, while
\[
 \operatorname{hit}_3\le s^--|O_{<j}|-1.
\]
The analogues of \eqref{eq:hit2-first} and \eqref{eq:hit2-second} are
\[
 \operatorname{hit}_2\le h+k_H+E_r+D_r-2|O_{\ge j}|-|D_{\ge j}|,
 \qquad
 \operatorname{hit}_2\le j-\lambda^*-1-|D_{<j}|.
\]
Using $|D_{<j}|+|D_{\ge j}|=D_r$ and $|O_{<j}|+|O_{\ge j}|=|\Lplus(H)|$, the same calculation gives
\[
 \lambda^*\le h-k_H-2+E_r+2|\Lminus(H)|+2s^-=\lambda^*-1,
\]
a contradiction. Finally,
$
 \gdown_r(H)=E_r+2\sum_C(|\Lminus(C)|-1)=E_r+2D_r+2P_r,
$
so $m^*$ is the right-hand side of \eqref{eq:sharing-bound}.
\end{proof}

\begin{remark}
The parameter $P_r$ measures genuine sharing: it is positive when positions reserved for one reverse component absorb part of the leaf demand of later components with the same father. Thus a tree close to equality in the coarse heart bound must have little such overlap.
\end{remark}

\section{Alternating double brooms}\label{sec:brooms}

We finish with a family illustrating local slack in the heart accounting. It is not a model for the balancing range in the proof of \cref{thm:main}: its heart has order asymptotic to one half of the tree, where the few-leaf estimate is already stronger. The point is instead that a direct two-sided pairing can remove the apparent loss completely.

For nonnegative integers $a,b$, let $B_{a,b}$ have vertices $u,v$, $x_i,y_i$ for $1\le i\le a$, and $z_j,w_j$ for $1\le j\le b$, with arcs $u\to v$, $u\to y_i$, $x_i\to y_i$, $z_j\to v$, and $z_j\to w_j$. Its order is $n_{a,b}=2+2a+2b$.

\begin{proposition}\label{prop:double-broom}
For all $a,b\ge0$, $\unv(B_{a,b})\le2n_{a,b}-2$.
\end{proposition}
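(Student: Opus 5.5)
We embed $B_{a,b}$ directly in a local median order $\sigma=(v_1,\ldots,v_m)$ of a tournament on $m=2n_{a,b}-2=4a+4b+2$ vertices, using a three-phase scheme modelled on the proof of \cref{lem:fixed-root}. The arithmetic is replaced by an explicit two-sided pairing, which exploits the rigid shape of the double broom. The cases $a=b=0$ (a single arc) and, more generally, the cases in which $B_{a,b}$ is a bi-arborescence are covered by \cref{lem:bi-arborescence}. So the real work is when $a\ge1$ or $b\ge1$ makes both sides of $u\to v$ nontrivial.

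\emph{Setup.} Rooted at $u$, the tree has an out-star $u\to v$, $u\to y_i$ on $a+2$ vertices. The in-leaves $x_i$ hang backwards from the $y_i$. The in-neighbours $z_j$ hang backwards from $v$, and each carries a forward leaf $w_j$. We reserve a left buffer of length $\lambda$ and place $u$ at $v_{\lambda+1}$. I would first try $\lambda=2a+2b$: each backward vertex $x_i$ or $z_j$ needs roughly two positions of slack on the left, which matches the chain-hit estimate \eqref{eq:chain-hit-lower}.

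\emph{Phase 1.} Embed the star $\{u,v,y_1,\ldots,y_a\}$ greedily in $(v_{\lambda+1},\ldots,v_m)$ by \cref{lem:forward-greedy}. We fix, after the image set is exposed, which image is $v$; the online clause of \cref{lem:greedy-arborescence} permits this. I would take $v$ to be the leftmost child image, so that the $z_j$, which must precede $v$'s image in the backward step, have the most room.

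\emph{Phase 2.} Embed each $z_j$ as the last unhit in-neighbour of $\phi(v)$ to its left. Immediately afterwards, embed $w_j$ as the first unhit out-neighbour of $\phi(z_j)$ to its right. Interleaving the two steps makes each pair $(z_j,w_j)$ consume one position on each side of $\phi(z_j)$.

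\emph{Phase 3.} Embed the $x_i$ as last unhit in-neighbours of the $\phi(y_i)$. As in \cref{lem:fixed-root}, we process the $y_i$ in decreasing order of image.

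\emph{Failure analysis.} If a backward step fails at some $v_j$, then \eqref{eq:feedback-backward} gives $\operatorname{hit}\ge(j-1)/2$ among $v_1,\ldots,v_{j-1}$. We count the hits directly. The star occupies at most $a+2$ positions, and by \cref{lem:forward-greedy} the failed positions in its greedy interval are paired with its out-leaves. Each completed pair $(z_j,w_j)$ or vertex $x_i$ contributes at most one hit to the left of the current failure. The aim is to obtain $2\operatorname{hit}\le j-2$, a contradiction, exactly as the final display of \cref{lem:fixed-root}.

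\emph{Main obstacle.} The forward steps for the $w_j$ in Phase 2 are the hard part. Each $w_j$ needs an unhit out-neighbour to the right of $\phi(z_j)$, but $\phi(z_j)$ may sit deep in the left buffer. To handle this, I would apply \eqref{eq:feedback-forward} on the interval from $\phi(z_j)$ to $\phi(v)$, and use the fact that $\phi(z_j)\to\phi(v)$ together with the ``last in-neighbour'' choice. This shows that positions between $\phi(z_j)$ and $\phi(v)$ are mostly out-neighbours of $\phi(z_j)$, half of them at least, which are still unhit. Compared with the generic heart bound, this is where the saving comes from: the $w_j$ absorb positions that the generic argument wastes as failed mates. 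If the direct count is off by one, the fallback is to shift $\phi(u)$ one step and use the global exception in \cref{lem:greedy-arborescence}.
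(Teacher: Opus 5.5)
There is a genuine gap: your failure analysis is only stated as an aim, and the step you flag as the main obstacle, placing each $w_j$ as the first unhit out-neighbour of $\phi(z_j)$, actually fails. Take the tournament on $\{1,\ldots,6\}$ with $\{1,2\}\Rightarrow\{3,4\}\Rightarrow\{5,6\}\Rightarrow\{1,2\}$ and $1\to2$, $3\to4$, $5\to6$.

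\begin{itemize}
\item The order $(1,\ldots,6)$ has exactly four backward arcs. The arc-disjoint directed triangles $(1,3,5)$, $(1,4,6)$, $(2,3,6)$, $(2,4,5)$ force at least four, so it is a median order.
\item Take $B_{0,1}$, the path $u\to v\leftarrow z_1\to w_1$. It is not a bi-arborescence, so it lies in your ``real work'' case.
\item Your choice $\lambda=2a+2b=2$ gives $\phi(u)=3$ and $\phi(v)=4$. Then $z_1$ goes to $2$, the last unhit in-neighbour of $4$.
\item The only out-neighbours of $2$ to its right are $3$ and $4$, both already hit, so $w_1$ cannot be placed.
\end{itemize}

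The same example refutes your claim that the out-neighbours of $\phi(z_j)$ between $\phi(z_j)$ and $\phi(v)$ supplied by \eqref{eq:feedback-forward} are still unhit. Here they are exactly $\phi(u)$ and $\phi(v)$. Your count ``each completed pair $(z_j,w_j)$ contributes at most one hit to the left of the current failure'' is also unjustified, since both images may lie to the left. The ``shift by one'' fallback is not an argument.

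The missing idea is that no greedy search or failure analysis is needed. The tree only requires $z_j\to v$ and $z_j\to w_j$. So choose $z_j$ and $w_j$ together as a pair of in-neighbours of $\phi(v)$, and let the tournament arc inside the pair decide which vertex is the tail $z_j$.

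The paper's construction is as follows.

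\begin{itemize}
\item It sends $u,v$ to the consecutive positions $v_{4b+1},v_{4b+2}$, so $u\to v$.
\item By \eqref{eq:feedback-backward}, at least $2b+1$ of $v_1,\ldots,v_{4b+1}$ dominate $v_{4b+2}$. It discards $v_{4b+1}$ and pairs $2b$ of the rest into the pairs $(z_j,w_j)$.
\item Dually, by \eqref{eq:feedback-forward}, $v_{4b+1}$ dominates at least $2a+1$ of the $4a+1$ positions to its right. It discards $v_{4b+2}$ and pairs $2a$ of the rest into $(x_i,y_i)$, with $x_i$ the tail.
\end{itemize}

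Thus the left buffer has length $4b$ and depends only on $b$. The $x_i$ live on the right, paired with the $y_i$ among the out-neighbours of $\phi(u)$, so your $a$-dependent buffer $2a+2b$ is misplaced. In the example above, this construction gives $u=5$, $v=6$, $z_1=3$, $w_1=4$.
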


\begin{proof}
Take a local median order $(v_1,\ldots,v_{4a+4b+2})$ of a tournament on $2n_{a,b}-2$ vertices, and send $u$ and $v$ to $v_{4b+1}$ and $v_{4b+2}$. Consecutive positions form a forward arc. There are $4a+1$ positions to the right of the image of $u$, at least $2a+1$ of which it dominates. Excluding the image of $v$, choose $2a$ and partition them into pairs; in each pair, name the tail $x_i$ and the head $y_i$. Dually, at least $2b+1$ of the $4b+1$ positions to the left of the image of $v$ dominate it. Excluding the image of $u$, choose $2b$, pair them, and name the tail and head $z_j,w_j$. The two collections lie on opposite sides of the central arc and are disjoint.
\end{proof}

\section{Concluding remarks}

The coefficient $18/7$ comes from using the failed-position pairing for the full leaf set of the equivalent arborescence after the online child relabelling has been completed. The resulting bound \eqref{eq:nk-bound} uniformly improves the previous many-leaf estimate for every non-bi-arborescence, rather than only at the final balancing point. The sharing parameter in \cref{prop:sharing} identifies another source of savings when several reverse components leave the same vertex.

To get closer to $2n$ within this framework, one needs a further linear saving when the heart has order close to $2n/7$, the actual balancing range of the two global estimates. The double-broom argument is not an extremal example for that range, but it shows that simultaneous two-sided pairing can be much more efficient than charging adjacent alternating branches independently.

\section*{Acknowledgements}
The first author was supported by the National Natural Science Foundation of China (Nos.~12522117 and 12401456) and the Fundamental Research Funds for the Central Universities, Nankai University.

\end{document}